\documentclass[12pt]{amsart}
\usepackage{tikz-cd}
\usetikzlibrary{matrix,arrows,decorations.pathmorphing}
\usepackage{amssymb}
\usepackage{amsmath,amscd}
\usepackage{mathrsfs}
\usepackage{url}
\usepackage{fullpage}
\usepackage{hyperref}
\usepackage{verbatim}
\usepackage{comment}
\usepackage{fancyvrb}
\usepackage{fvextra}
\newtheorem{thm}{Theorem}[section]
\newtheorem{prop}[thm]{Proposition}
\newtheorem{lem}[thm]{Lemma}

\newtheorem{conj}[thm]{Conjecture}

\theoremstyle{definition}

\newtheorem{example}[thm]{Example}

\newtheorem{rem}[thm]{Remark}

\numberwithin{equation}{section}

\renewcommand{\L}{\mathcal{L}}

\newcommand{\T}{\mathcal{T}}

\newcommand{\qq}{\mathbb{Q}}

\newcommand{\C}{\mathcal{C}}
\newcommand{\M}{\mathcal{M}}
\renewcommand{\L}{\mathcal{L}}
\newcommand{\p}{\mathbb{P}}
\newcommand{\pp}{\mathbb{P}}

\renewcommand{\P}{\mathcal{P}}

\renewcommand{\O}{\mathcal{O}}

\renewcommand{\tilde}{\widetilde}
\DeclareMathOperator{\rank}{rank}

\DeclareMathOperator{\codim}{codim}

\DeclareMathOperator{\area}{area}
\DeclareMathOperator{\nonhyp}{nonhyp}

\usepackage{colonequals}

\newcommand{\K}{\mathcal{K}}

\author{Dawei Chen}
\address{Department of Mathematics, Boston College, Chestnut Hill, MA 02467}
\email{dawei.chen@bc.edu}

\author{Hannah Larson}
\address{Department of Mathematics, University of California Berkeley}
\email{hlarson@berkeley.edu}

\title{Independence of tautological classes and cohomological stability for strata of differentials}

\begin{document}

\begin{abstract}
The tautological rings of strata of differentials are known to be generated by divisor classes. In this paper, we give lower bounds on the degrees of relations among them, depending on the genus $g$ and the number of simple zeros. For strata with more than $4g/3$ simple zeros, our results show that there are no relations in degrees less than $\lfloor g/3 \rfloor + 1$. 
Moreover, we conjecture that, outside of a few exceptions, there is always a non-trivial relation in degree $\lfloor g/3 \rfloor + 1$, and prove the conjecture for all strata of holomorphic abelian differentials with $g \leq 30$. 
We also prove that the cohomology rings of strata of holomorphic differentials with sufficiently many simple zeros stabilize to the free algebra on the tautological divisor class. Finally, we show that for a large class of holomorphic abelian strata, containing hyperelliptic differentials, the tautological ring is non-trivial for sufficiently large $g$. 
\end{abstract}

\maketitle

\section{Introduction}

Holomorphic abelian differentials on smooth and compact complex algebraic curves induce translation structures on the underlying (Riemann) surfaces with conical singularities at the zeros of the differentials, and affine transformations in ${\rm GL}_2^{+}(\mathbb R)$ acting on them preserve the orders of the zeros. This viewpoint makes the study of moduli spaces of holomorphic differentials with prescribed orders of zeros become an important subject in moduli theory and surface dynamics. Additionally, meromorphic differentials and higher-order differentials (as possibly meromorphic sections of $K^{\otimes \ell}$ for $\ell > 1$) naturally appear for compactifying moduli spaces of holomorphic differentials and for the study of cone metrics on surfaces, respectively. In general, moduli spaces of (possibly meromorphic and higher-order) differentials with prescribed orders of zeros and poles are said to be {\em strata of differentials}, since their adjacency is induced by merging the zeros or poles, thus providing a stratification for the total space of differentials. We refer the reader to \cite{Zo06, Wr15, Ch17, BCGGMk} for an introduction to this fascinating topic. 

Up to projectivization, strata of differentials are moduli spaces parameterizing smooth curves together with  pluricanonical divisors having given multiplicity. Given $\ell \geq 1$ and an integer partition $\mu = (m_1, \ldots, m_n)$ of $\ell(2g - 2)$ with $2g-2+n > 0$, we define
\[\P^\ell(\mu) \colonequals \left\{(C,z_1, \ldots, z_n) : \sum_{i=1}^n m_i z_i \sim 
\ell K_C\right\} \subset \M_{g,n}. \]
In $\P^\ell(\mu)$, the zeros and poles are labeled, but one may also consider unlabeled strata $\P(\mu)/\mathbb{S}_\mu$, where
$\mathbb{S}_\mu \subset \mathbb{S}_n$ is the subgroup of permutations $\sigma$ satisfying $m_{\sigma(i)} = m_i$. In the holomorphic case, the unlabeled strata give stratification of the $\ell$-Hodge bundle. 

The strata $\P^\ell(\mu)$ are smooth orbifolds which nevertheless can be disconnected for special $\mu$. The classification of connected components of $\P^\ell(\mu)$ has
been completed for holomorphic abelian differentials ($\ell=1$ and $m_i \geq 0$ for all $i$, \cite{KZ}); meromorphic abelian differentials ($\ell=1$ and some $m_i < 0$, \cite{Bo15}); quadratic differentials of finite area ($\ell=2$ and $m_i \geq -1$ for all $i$, \cite{La04H, La04S, La08, CM14}); and quadratic differentials of infinite area ($\ell=2$ and some $m_i < -1$, \cite{CG22}). For $\ell\geq 3$, the classification of connected components of
$\P^\ell(\mu)$ remains unknown. Besides certain ad hoc structures occurring in low genus, the only known invariants that can  distinguish connected components for general $g$ 
are the hyperelliptic structure and the spin parity. 

On the other hand, the dimensions of components of $\P^{\ell}(\mu)$ are completely known. Let $\P^{\ell}(\mu)^{\circ}$ be a connected component of $\P^{\ell}(\mu)$. We say that $\P^{\ell}(\mu)^{\circ}$ is of {\em holomorphic abelian type} if it parameterizes $\ell$th powers of holomorphic abelian differentials. Then we have 
\[  
\dim \P^{\ell}(\mu)^{\circ} = \begin{cases}
    2g-2 + n & \mbox{if it is of holomorphic abelian type}  \\
    2g-3+n & \mbox{otherwise}
\end{cases}
\]
(see \cite{Ma82, Ve82, Ve90, BCGGMk}). 
Here, we study the Chow rings of $\P^{\ell}(\mu)$, always working with rational coefficients. 

In the study of the Chow ring of $\M_{g,n}$, certain natural classes called \emph{tautological classes} play an important role. To define these classes, let $\pi \colon \C \to \M_{g,n}$ be the universal curve, let $\sigma_1, \ldots, \sigma_n\colon \M_{g,n} \to \C$ be the disjoint sections corresponding to the $n$ markings, and let $D_i \subset \C$ denote the image of the $i$th section.
The following classes are defined in the Chow ring $A^*(\M_{g,n})$:
\[\psi_i \colonequals \sigma_i^*c_1(\omega_\pi) \in A^1(\M_{g,n}) \qquad \text{and} \qquad \kappa_j \colonequals \pi_*(c_1(\omega_\pi(D_1 + \cdots + D_n)^{j+1})) \in A^j(\M_{g,n}).\]
The subring of the Chow ring of $\M_{g,n}$ generated by the above $\psi$ and $\kappa$ classes is called the \emph{tautological ring} and is denoted $R^*(\M_{g,n}) \subseteq A^*(\M_{g,n})$.

It turns out that the generators of the tautological ring are independent in low degrees. More precisely, by \cite{Boldsen} the surjection
\[\qq[\kappa_1, \kappa_2, \ldots, \psi_1, \ldots, \psi_n] \to R^*(\M_{g,n})\]
is actually an isomorphism in degrees $* \leq g/3$.
This bound is sharp, as there are known relations in degree $\lfloor g/3 \rfloor + 1$ \cite{Ionel2}.
Moreover, stability theorems of Harer and Madsen--Weiss say that the images of tautological classes under the cycle class map freely generate the stable cohomology of $\M_{g,n}$.
In this paper, we prove analogues of each of these celebrated results for the tautological rings of strata of differentials, whose definition we recall below.

One source of tautological classes  on $\P^\ell(\mu)$ is the pullbacks of tautological classes on $\M_{g,n}$ along the inclusion 
\[\iota \colon \P^\ell(\mu) \hookrightarrow \M_{g,n}.\] 
There is also another tautological class on $\P^\ell(\mu)$ defined as follows.
Let $\mathcal{A}$ be the line bundle on $\P^\ell(\mu)$
whose fiber at $(C, z_1, \ldots, z_n)$ is the one-dimensional vector space of differentials with multiplicity $m_i$ at $z_i$.
Precisely, if
$\pi^\mu \colon \C^\mu \to \P^\ell(\mu)$ is the restriction of the universal curve and $D_i^\mu \subset \C^\mu$ are the images of the $i$th sections, then $\mathcal{A} = \pi^\mu_*\omega^{\otimes\ell}(-m_1D_1^\mu - \cdots - m_nD_n^\mu)$.
We define
\begin{equation} \label{etadef} \eta \colonequals c_1(\mathcal{A}) \in A^1(\P^\ell(\mu)).
\end{equation}
The \emph{tautological ring}, denoted $R^*(\P^{\ell}(\mu)) \subseteq A^*(\P^{\ell}(\mu))$, is defined as the subring generated by $\eta$ and the pullbacks $\iota^*\psi_i$ and $\iota^*\kappa_j$.

In \cite[Proposition 2.1]{ChenTauto}, the first named author 
proves the following relations among tautological classes
\begin{equation}  \label{iota} \eta = (m_i + \ell) \iota^*\psi_i 
\qquad \text{and} \qquad \iota^*\kappa_j = \ell^{-j}(2g - 2 + n) \eta^j.
\end{equation}
In particular, if $m_i \neq -\ell$ for all $i$, we find that $\eta$ generates the tautological ring. However, if $m_i = -\ell$ for some $i$, then $\eta = \kappa_j = 0$ and the tautological ring is generated by the $\psi$ classes at the marked points with $m_i = -\ell$.

\begin{thm} \label{thm:lowerbound}
Let $\mu = (m_1, \ldots, m_k, 1^{\ell(2g - 2) - m})$ with $m = m_1 + \cdots + m_k$ the sum of $k$ specified orders such that the remaining orders are all equal to $1$.
Let $r$ be such that $m_i < 0$ for $i \leq r$ and $m_i \geq 0$ for $i > r$.

If $m_i \neq -\ell$ for all $i$, then the surjection
\begin{equation} \label{t1}
\qq[\eta] \rightarrow R^*(\P^{\ell}(\mu))
\end{equation}
is an isomorphism in degrees $* \leq \min\{g/3, \ell(2g - 2) - m - g + \delta_{0r}\delta_{1\ell} - 1\}$.

If some $m_i = -\ell$, let $\{i_1, \ldots, i_s\} \subset \{1, \ldots, n\}$ be the  indices for which $m_{i_j} = -\ell$. Then, the surjection
\begin{equation} \label{t2}
\qq[\psi_{i_1}, \ldots, \psi_{i_s}] \rightarrow R^*(\P^{\ell}(\mu))
\end{equation}
is an isomorphism in degrees $* \leq \min\{g/3, \ell(2g - 2) - m - g + \delta_{0r}\delta_{1\ell} - 1\}$.
\end{thm}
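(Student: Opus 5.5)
The plan is to realize $\P^\ell(\mu)$, up to a finite cover, as an open subset of a projective bundle over a moduli space of curves with only the $k$ special markings. We then transport Boldsen's independence result on $\M_{g,k}$ to $\P^\ell(\mu)$.

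Put $d = \ell(2g-2) - m$, the number of simple zeros. Consider the forgetful map $f\colon \P^\ell(\mu) \to \M_{g,k}$ remembering $(C, z_1, \ldots, z_k)$. Over $\M_{g,k}$ we have the line bundle $L = \omega^{\otimes \ell}(-\sum_{i \le k} m_i D_i)$ on the universal curve, of relative degree $d$.
\begin{itemize}
\item When $d \ge 2g-1$, the sheaf $V = \pi_* L$ is a vector bundle of rank $N+1$ with $N = d - g$. The exceptional case is $r = 0$, $\ell = 1$: there $\omega$ itself may be twisted, so $h^0$ jumps by one and $N = d - g + \delta_{0r}\delta_{1\ell}$. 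This is the source of the Kronecker delta in the statement.
\item A point of $\P^\ell(\mu)$, with the simple zeros ordered, is a section of $L$ up to scaling whose divisor is reduced and disjoint from the $z_i$.
\item Hence $\P^\ell(\mu)$ is a finite étale cover (by $\mathbb{S}_d$, ordering the simple zeros) of the complement $U = \mathbb{P}(V) \smallsetminus \Delta$, where $\Delta$ is the discriminant: sections with a multiple zero or vanishing at some $z_i$.
\item Under this identification $\eta$ is the restriction of $c_1(\O_{\mathbb{P}(V)}(-1))$.
\end{itemize}
Finite étale covers are injective on rational Chow groups, so it suffices to work on $U$.

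The key steps are as follows.
\begin{enumerate}
\item Use \eqref{iota} to rewrite $\eta$. When $k \ge 1$ and some $m_i \neq -\ell$, \eqref{iota} gives $\eta = (m_i+\ell) f^*\psi_i$, so $\eta^j$ is the pullback of a nonzero multiple of $\psi_i^j$ from $\M_{g,k}$. When all markings have $m_i = -\ell$, the $\psi_{i_j}$ are likewise pulled back from $\M_{g,k}$. When $k = 0$, use $\kappa_1$ and the second relation in \eqref{iota} instead.
\item Invoke \cite{Boldsen}: for $j \le g/3$ these monomials are linearly independent in $R^*(\M_{g,k})$.
\item It then suffices to show that $f^*\colon A^j(\M_{g,k}) \to A^j(U)$ is injective for $j \le N - 1$.
\end{enumerate}

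For step (3), first note that $A^*(\mathbb{P}(V)) = A^*(\M_{g,k})[\zeta]/(\text{relation of degree } N+1)$ is free over the base with basis $1, \zeta, \ldots, \zeta^N$. Excision gives $A^*(U) = A^*(\mathbb{P}(V))/\mathrm{im}\, A_*(\Delta)$. The generic stratum of each component of $\Delta$ is a projective bundle of dimension $N-1$ over a space of the same type, namely a universal curve or a marking. Its pushforward in degree $j$ has a nonzero $\zeta$-component in the top range, coming from the hyperplane classes. I would argue degree by degree that a pure base class $\beta$ with $\deg \beta \le N-1$ cannot lie in the image of $A_*(\Delta)$. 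To do this, write any class pushed forward from $\Delta$ in the $\zeta$-basis, and compare the coefficient of $\zeta^{N - \deg}$ and the fiberwise degree. The model case is the fiber $\mathbb{P}^N \smallsetminus \Delta_0$: its Chow ring vanishes in positive degree, yet the base classes survive in the relative sense.

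The main obstacle is this injectivity step. The difficulty is controlling all strata of $\Delta$: multiple zeros of every multiplicity pattern, and zeros colliding with the $z_i$. Doing so must produce exactly the bound $N - 1 = d - g + \delta_{0r}\delta_{1\ell} - 1$. I would first try a fiberwise argument via the Leray spectral sequence in cohomology combined with the cycle class map. Alternatively, I would construct an explicit test family over which $\Delta$ meets each fiber in a controlled way. The $\psi$-monomial statement \eqref{t2} follows by the identical argument with $\psi_{i_1}, \ldots, \psi_{i_s}$ in place of $\eta$.
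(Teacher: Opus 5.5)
Your setup, the projective bundle of sections of $\omega^{\otimes\ell}(-\sum m_iD_i)$ over $\M_{g,k}$ with the vanishing-at-$z_i$ divisors and the discriminant removed, is exactly the paper's $\overline{X}^\ell(m_1,\ldots,m_k)\supset X$. However, the reduction in step (3) is false, and this is a genuine gap. The relations \eqref{iota} hold on $\P^\ell(\mu)$, hence on $U$, since pullback to your finite cover is injective. These relations kill pure base classes. The classes $f^*\kappa_1=\iota^*\kappa_1-\sum_{i>k}\iota^*\psi_i$ and $f^*\psi_1$ are both rational multiples of $\eta$. So $f^*(\kappa_1-c\,\psi_1)=0$ for a suitable constant $c$, and $f^*\kappa_1=0$ outright if some $m_i=-\ell$. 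Yet $\kappa_1-c\,\psi_1\neq 0$ in $A^1(\M_{g,k})$ for $g\ge 3$, by the very independence result you cite in step (2).

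When all $m_i\neq-\ell$, every $\kappa,\psi$ monomial of degree $j$ restricts to a multiple of $\eta^j$. So $\ker f^*$ has codimension at most one in the span of tautological monomials; $f^*$ is very far from injective. Concretely, combinations of $[A_i]=-\eta+(\ell+m_i)\psi_i$ and $\pi_*(\psi_{k+1}^i\cdot[\tilde B])=\ell(\ell+1)\kappa_{i+1}+\cdots$ coming from the boundary eliminate $\eta$ and produce nonzero base classes. Hence no comparison of $\zeta$-coefficients can show that base classes of degree $\le N-1$ survive. What you actually need is the \emph{exact} kernel of $\qq[\eta,\kappa_1,\kappa_2,\ldots,\psi_1,\ldots,\psi_k]\to$ (classes on $X$). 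Then you check that the quotient is $\qq[\eta]$ (resp.\ $\qq[\psi_{i_1},\ldots,\psi_{i_s}]$), as in \eqref{eq:modAB}.

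That kernel cannot be computed in Chow. The image of $A_*(\Delta)$ contains pushforwards of arbitrary cycles on the $A_i$ and on the discriminant. These live over $\M_{g,k}$ and over the universal curve, whose Chow groups are unknown, and Boldsen's independence result says nothing about them. The ingredient you are missing is the paper's passage to cohomology via the cycle class map; injectivity there implies injectivity on $R^*$. The paper then restricts to pure weight parts, where proper surjective pushforwards are surjective. The surjectivity half of Theorem \ref{stab} shows that the low-degree cohomology of three spaces is tautological:
\begin{itemize}
\item $\overline{X}$ itself;
\item $A_i\cong\overline{X}^\ell(\ldots,m_i+1,\ldots)$;
\item $\overline{X}^\ell(m_1,\ldots,m_k,2)$, which contains an open subset mapping properly onto $B$ through the linearization $\tilde B$.
\end{itemize}
So the kernel is exactly the ideal generated by the $[A_i]$ and $\pi_*(\psi_{k+1}^i\cdot[\tilde B])$. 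This also sidesteps your planned stratification of $\Delta$ by all multiplicity patterns. It is also where the ``$-1$'' in the bound comes from: stability for $\overline{X}^\ell(m_1,\ldots,m_k,2)$ in degree $*-2$, i.e.\ the term $\mathsf{s}^\ell_r(m+2)+2$.

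Separately, $\pi_*L$ is not a vector bundle over all of $\M_{g,k}$ in most of the range. For $\ell=1$, $r=0$ one always has $\deg L=2g-2-m<2g-1$, and this is precisely the case producing $\delta_{0r}\delta_{1\ell}$. So you must also excise the jumping locus and bound $\codim\phi^{-1}(Z)$ from below, as in Lemmas \ref{lem:exceptional} and \ref{l>1}.
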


Theorem \ref{thm:lowerbound}
is a consequence of Theorem \ref{purewt} which shows that the appropriate tautological classes freely generate the pure weight cohomology of $\P^\ell(m_1, \ldots, m_k, 1^{\ell(2g - 2) - m})/\mathbb{S}_{\ell(2g - 2) - m}$ in the specified range of degrees.
To prove this result, we introduce a partial compactification $\overline{X}^\ell(m_1, \ldots, m_k) \supset \P^\ell(m_1, \ldots, m_k, 1^{\ell(2g - 2) - m})/\mathbb{S}_{\ell(2g - 2) - m}$, which parameterizes pluricanonical divisors on $k$-pointed curves having orders at least $m_1, \ldots, m_k$ at the marked points. 
In Section \ref{sec:lower}, we prove that, away from loci of high codimension, $\overline{X}^\ell(m_1, \ldots, m_k)$ is isomorphic to a projective bundle over an open substack of $\M_{g,k}$ and give bounds on the codimension of the loci that must be removed.
Using known results on the stability of the cohomology of $\M_{g,k}$, this determines the cohomology of $\overline{X}^\ell(m_1, \ldots, m_k)$ up to high codimension.
Then, in Section \ref{sec:pure}, we study the boundary of our compactification, which it turns out is surjected onto by a union of other 
$\overline{X}^\ell(m_1', \ldots, m_{k'}')$. This allows us to set up an excision calculation which determines all relations among tautological classes on $\P^\ell(m_1, \ldots, m_k, 1^{\ell(2g - 2) - m})/\mathbb{S}_{\ell(2g - 2) - m}$ in low degrees.

In Section \ref{sec:stab}, via a more careful analysis of this stratification and the localization long exact sequence,
we prove that the entire cohomology of the strata of holomorphic $\ell$-differentials also stabilizes as the genus tends to infinity (where the extra simple zeros added are unordered). 
\begin{thm} \label{thm:holstab}
Let $m_1, \ldots, m_k > -\ell$ be given.
The cohomology of the associated strata of $\ell$-differentials with sufficiently many simple zeros stabilizes as 
\[\lim_{g \to \infty} H^*(\P^\ell(m_1, \ldots, m_k, 1^{\ell(2g - 2) - m})/\mathbb{S}_{\ell(2g - 2) - m}) = \qq[\eta].\]
\end{thm}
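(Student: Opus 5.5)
Writing $X = \P^\ell(m_1, \ldots, m_k, 1^{N})/\mathbb{S}_N$ with $N = \ell(2g-2)-m$, the plan is to realize $X$ as an open subset of the partial compactification $\overline{X}^\ell(m_1,\ldots,m_k)$. This space parameterizes a $k$-pointed smooth curve $(C,z_1,\ldots,z_k)$ together with a divisor $D \in |\ell K_C - \sum m_i z_i|$; no condition is imposed on $D$ except that it is effective away from the $z_i$ up to the prescribed orders. The open locus $X$ is where $D$ is reduced and disjoint from the $z_i$.

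\textbf{Step 1: cohomology of $\overline{X}$ in low degrees.} The line bundle $\ell K_C - \sum m_i z_i$ has degree $N \ge \ell(2g-2) - m$. Since $m_i > -\ell$ and $k$ is fixed, for $g$ large this degree exceeds $2g-2$ away from a locus of high codimension, so $h^0$ is constant. There $\overline{X}$ is a projective bundle $\p(\mathcal{E})$ over an open substack $U \subset \M_{g,k}$ whose complement has codimension growing with $g$; I take this from the Section~\ref{sec:lower} analysis. The projective bundle formula then gives
\[
H^*(\overline{X}) \cong H^*(U)[\eta]/(\text{relation in degree } \rank \mathcal{E}),
\]
where $\eta$ is the hyperplane class up to sign. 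By Madsen--Weiss/Harer stability and the codimension bound, $H^*(U) = \qq[\kappa_1,\kappa_2,\ldots,\psi_1,\ldots,\psi_k]$ in a range of degrees growing linearly in $g$. The rank is about $N - g$, which also tends to infinity.

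\textbf{Step 2: excision of the boundary.} The complement $\overline{X} \smallsetminus X$ is the locus where $D$ has a multiple point or meets some $z_i$. As stated in the outline, this boundary is surjected onto by finitely many $\overline{X}^\ell(m'_1,\ldots,m'_{k'})$. These are obtained by adding one extra marked point of order $2$, or by increasing some $m_i$ by one, and iterating gives deeper strata. I stratify the boundary by these images, each open stratum being a quotient of some $X' = \P^\ell(m',1^{N'})/\mathbb{S}_{N'}$ of smaller $N'$. Then I run the localization (Gysin) long exact sequence
\[
\cdots \to H^{*-2c}_{BM}\text{-pieces of boundary strata} \to H^*(\overline{X}) \to H^*(X) \to \cdots
\]
inductively on the stratification.

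\textbf{Step 3: inductive computation.} By induction, assuming stability for all $X'$ with the $m'$ pattern, each stratum contributes in the stable range a free $\qq[\eta']$ shifted by twice its codimension $c$. The $\kappa_j$ and $\psi_i$ classes of $H^*(U)$ must be killed in $H^*(X)$: by \eqref{iota} they become multiples of powers of $\eta$ on $X$. The content is that the Gysin images from the boundary exactly account for the difference between $H^*(U)[\eta]$ and $\qq[\eta]$.

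This is the main obstacle, and I would attack it first. Rather than tracking all differentials, I would compare weights. The classes from $\overline{X}$ are pure. The difference, $\qq[\kappa,\psi]\cdot\eta^{\ge 0}$ modulo $\qq[\eta]$, should be matched one-to-one with the pushforwards of fundamental classes times tautological classes of the codimension-one boundary divisors (the $2$-point and increased-$m_i$ divisors), using \eqref{iota} to express $\kappa_j,\psi_i$ restricted to $X$. One then has to show that the higher strata cancel, meaning that the sequence of Gysin maps forms an exact Koszul-type complex in the stable range. I expect to prove this by showing that $\overline{X}$ with boundary stratified as above is, stably, a resolution whose associated spectral sequence degenerates, with $E_1$ computed entirely from the induction hypothesis, so that only $\qq[\eta]$ survives in $H^*(X)$ as $g \to \infty$.
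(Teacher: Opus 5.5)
Your framework is the paper's: the partial compactification $\overline{X}^\ell(m_1,\ldots,m_k)$ with stable cohomology $\qq[\kappa_1,\kappa_2,\ldots,\psi_1,\ldots,\psi_k,\eta]$ (Lemma \ref{lem:stab}), a stratification of $\overline{X}\smallsetminus X$ into locally closed pieces $S(\vec a,\vec b)$ that are finite quotients of other $X^\ell(m_1',\ldots,m_{k'}')$, and localization plus induction. The gap is the step you yourself flag as the main obstacle: showing that the boundary accounts for exactly the kernel of $\qq[\kappa,\psi,\eta]\to\qq[\eta]$ and contributes nothing in odd degrees. You only state this as an expectation, and the tools you name do not supply it.

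\emph{Weights cannot do it.} Write $\overline{X}_c$ for the complement of the strata of codimension $\geq c$. The targets of the connecting maps $H^i(\overline{X}_c)\to\bigoplus_{\codim S=c}H^{i+1-2c}(S)$ are pure. After the Tate twist they have weight $i+1$, which $H^i$ of a smooth non-proper space may carry, so purity does not force these maps to vanish. A nonzero map of exactly this kind is how $H^1\neq0$ arises in Example \ref{ex:mero}.

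\emph{A Koszul-type resolution is not available as stated.} Building one from the closed boundary divisors and their full tautological cohomology would require control of their intersections. These are not normal crossings: along triple zeros, $\overline{B}$ is cuspidal. You give no mechanism for exactness.

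\emph{The hypothesis is unused.} Nothing in your Step 3 uses $m_i>-\ell$, yet the conclusion fails without it.

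The missing idea is a dimension count, run by induction on the cohomological degree $i$ simultaneously for all $(m_1',\ldots,m_{k'}')$, not on $N'$.

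\emph{Odd degrees.} Let $i$ be odd and $1\le c\le (i+1)/2$. A codimension-$c$ stratum is a finite quotient of some $X^\ell(m')$ with $m'\le m+2c$. The needed degree $i+1-2c$ drops at least as fast as the stable range shrinks, so induction gives $H^{i+1-2c}(S)=\qq\cdot\eta^{(i+1)/2-c}$.
\begin{itemize}
\item \emph{Upper bound.} The kernel of $H^{i+1}(\overline{X})\to H^{i+1}(X)$ has dimension at most the number of $\eta$-decorated boundary strata $[\overline{S}(\vec a,\vec b)]\,\eta^e$ in that degree. Equality holds iff every Gysin map $\bigoplus_{\codim S=c}H^{i+1-2c}(S)\to H^{i+1}(\overline{X}_{c+1})$ is injective. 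When all $m_i>-\ell$, $\codim S(\vec a,\vec b)=\sum a_j+\sum jb_j$. Hence these strata biject with the monomials $\prod\psi_j^{a_j}\prod\kappa_j^{b_j}\eta^e$ other than $\eta^{(i+1)/2}$, giving $d(i+1)-1$ of them.
\item \emph{Lower bound.} The $d(i+1)$ tautological monomials are independent in $H^{i+1}(\overline{X})$. By \eqref{iota} they restrict into the line spanned by $\eta^{(i+1)/2}$, so the kernel has dimension at least $d(i+1)-1$.
\item \emph{Conclusion.} Equality forces all the Gysin maps to be injective. So the restriction $0=H^i(\overline{X})\to H^i(X)$ is surjective, and $H^i(X)=0$.
\end{itemize}

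\emph{Even degrees.} These are then easy. $H^i(\overline{X})$ is tautological and restricts onto $\qq\cdot\eta^{i/2}$. The restriction is surjective because the odd groups $H^{i+1-2c}(S)$ vanish by induction.

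A minor point: in Step 1 for $\ell=1$, the degree $2g-2-m$ does not exceed $2g-2$. Constancy of $h^0$ off a high-codimension locus comes from Lemma \ref{lem:exceptional}, a Hurwitz-space dimension bound, not from a degree argument.
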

\noindent
See Lemma \ref{12} for more precise statements and Remark \ref{rem:gluing} for a geometric interpretation of the stabilization maps. We remark that the condition $m_i > -\ell$ for all $i$ is equivalent to the condition that the flat surface structure induced by the differential has finite area. Therefore, it makes sense to still say that an $\ell$-differential is ``holomorphic'' if its pole orders are bounded by $\ell-1$. 

Meanwhile, for strata of meromorphic differentials, we demonstrate (see Example \ref{ex:mero}) that $H^1(\P^\ell(m_1, \ldots, m_k, 1^{\ell(2g - 2) - m})/\mathbb{S}_{\ell(2g - 2) - m})$ may be non-vanishing, even as $g$ tends to infinity, so in particular the full cohomology ring cannot stabilize to the tautological ring.

\begin{rem}
The $\ell = 1$ case of Theorem \ref{thm:holstab} was independently established in contemporaneous work of Tosteson \cite{Tosteson} using different techniques. When $\ell = 1$, our stable range is $* \leq \frac{1}{2}(g - m-2)$, which is roughly $\frac{1}{6}g$ larger than the stable range there. Tosteson's work also gives stability results for the integral homology groups of $\P^1(m_1, \ldots, m_k, 1^{2g - 2 - m})/\mathbb{S}_{2g - 2 - m}$.
\end{rem}

We expect that the bounds in Theorem \ref{thm:lowerbound} are sharp when $m_i \neq -\ell$ for all $i$, and the minimum in the statement coincides with $g/3$, equivalently when the number of simple zeros is greater than $4g/3$.
This is because there are known relations in $R^*(\M_{g,n})$
beginning in degree $\lfloor g/3 \rfloor + 1$. Pulling back these relations along the inclusion $\iota$ gives rise to relations in the tautological ring of $\P^\ell(\mu)$, which we conjecture to be non-trivial outside a small list of explicit exceptions. (In the case that $m_i = -\ell$ for some $i$, the pullback of these relations is trivial by \eqref{iota}.)

\begin{conj} \label{conj:upperbound}
Let $g \geq 2$ and let $\mu$ be any partition of $\ell(2g - 2)$ such that $m_i \neq -\ell$ for all $i$. Then the kernel of \eqref{t1} is non-trivial in degree $* = \lfloor g/3 \rfloor + 1$.
\end{conj}

Combining explicit formulas for degree $\lfloor g/3 \rfloor + 1$ relations in $R^*(\M_g)$ with \eqref{iota}, in Section \ref{sec:upper}, we reduce Conjecture \ref{conj:upperbound} for holomorphic abelian differentials to the non-vanishing of a particular coefficient of an explicit power series. Using a computer, we verify this coefficient is non-vanishing in many cases.\footnote{Ranging over all partitions of $2g - 2$ for $g \leq 30$ represents the first approximately $2.6$ million cases of the conjecture.} 

\begin{thm} \label{thm:upperbound}
If $g \leq 30$, then
Conjecture \ref{conj:upperbound} holds for any positive partition of $2g - 2$.
\end{thm}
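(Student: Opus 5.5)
We need relations in degree $d=\lfloor g/3\rfloor+1$ in $R^*(\M_{g,n})$ whose pullback under $\iota$ is non-zero. Once the pullback is non-zero it is a non-trivial element of the kernel of \eqref{t1}. By \eqref{iota}, that pullback is a scalar multiple of $\eta^d$, so everything reduces to showing that the scalar is non-zero.

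The first step is to choose the relations. Rather than working on $\M_{g,n}$ directly, I would use the Faber--Zagier relations on $\M_g$ in degree $d$, which are the ones known from \cite{Ionel2} to begin in this degree. I would pull them back to $\M_{g,n}$ via the forgetful map $\pi\colon\M_{g,n}\to\M_g$. One has to express $\pi^*\kappa_j$ in terms of the $\kappa$ and $\psi$ classes on $\M_{g,n}$, using $\pi^*\kappa_j=\kappa_j-\sum_i\psi_i^j$ with the paper's convention for $\kappa$ (the version twisted by $D_1+\cdots+D_n$). Then restrict to $\P^1(\mu)$, where by \eqref{iota} (with $\ell=1$)
\[\iota^*\kappa_j=(2g-2+n)\eta^j \qquad\text{and}\qquad \iota^*\psi_i=\frac{\eta}{m_i+1}.\]
Hence each $\iota^*\pi^*\kappa_j$ equals $c_j(\mu)\,\eta^j$, where
\[c_j(\mu)=2g-2+n-\sum_i (m_i+1)^{-j},\]
a power-sum type expression in the $m_i$. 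A monomial $\prod\kappa_{j}$ of degree $d$ in a Faber--Zagier relation therefore pulls back to $\prod c_{j}(\mu)\cdot\eta^d$.

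The second step is to package this as a power series. The Faber--Zagier relations are generated as coefficients of $\exp$ of $\sum_j \kappa_j t^j\cdot(\text{explicit series})$, from the series $A(t),B(t)$ built from $\frac{(6i)!}{(3i)!(2i)!}$. Substituting $\kappa_j\mapsto c_j(\mu)$ turns the relation into the coefficient of a fixed monomial in an explicit power series. Since
\[\sum_j c_j(\mu)x^j=\frac{(2g-2+n)x}{1-x}-\sum_i\frac{x}{m_i+1-x},\]
this series has a closed form involving $\log$ or rational factors in the $m_i$. The task then becomes proving that one particular coefficient (or at least one coefficient among the family of degree $d$ relations) is non-zero. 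I would first try the simplest relation, the one with $p=(0,0,\dots)$ and $r=3d-g-1$ in Faber--Zagier notation. The main obstacle is that there is no obvious reason this coefficient cannot vanish for special $\mu$; it depends on $\mu$ only through symmetric functions of $1/(m_i+1)$, and cancellation is possible.

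The third step handles this obstacle by exhaustion, since the statement is restricted to $g\le 30$. For each $g\le30$ and each partition $\mu$ of $2g-2$, I would compute the coefficient exactly, in rational arithmetic, by truncated power-series expansion. For efficiency I would exploit the fact that the answer depends only on the multiset of the $m_i$, so one can iterate over the roughly $2.6$ million partitions. For any partition where the first chosen relation gives zero, I would try other degree-$d$ Faber--Zagier relations (other $p$ or $r$) until one gives a non-zero value. The theorem follows if, for every partition, some pulled-back relation has a non-zero scalar.
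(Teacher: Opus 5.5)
Your outline matches the paper's proof. You pull back a degree $d=\lfloor g/3\rfloor+1$ relation of Ionel/Faber--Zagier along $\P^1(\mu)\to\M_{g,n}\to\M_g$. You use $f^*\kappa_j=\kappa_j-\sum_i\psi_i^j$ and \eqref{iota} to get $\kappa_j\mapsto c_j(\mu)\eta^j$, which is \eqref{eq:kappa-eta}. You then verify by computer that the resulting scalar is non-zero for every partition with $g\le 30$.

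The convenient packaging is not a closed form for $\sum_j c_j(\mu)x^j$. Rather, the $\kappa_j$ enter the exponent linearly against the coefficients $c_j$ of $\log\mathsf{C}$, so the substitution turns $\exp(-\sum_j c_j\kappa_j t^j)$ into $\prod_i\mathsf{C}(\frac{t}{m_i+1})/\mathsf{C}(t)^{2g-2+n}$, as in \eqref{testmu02}. The paper works over $\mathbb{F}_p$ rather than $\qq$ only for speed.

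\textbf{The gap: the case $g\equiv 1 \pmod 3$.} The Faber--Zagier relation $[\exp(-\gamma)]_{t^rp^\sigma}=0$ holds in $R^r(\M_g)$ only when $3r\ge g+|\sigma|+1$ \emph{and} $3r\equiv g+|\sigma|+1 \pmod 2$. So your ``$r=3d-g-1$'' should read $r=d$, with $|\sigma|\le 3d-g-1$ of the same parity. Since $3d-g-1=2,1,0$ for $g\equiv 0,1,2 \pmod 3$, the $p=0$ coefficient in degree $d$ is \emph{not} a relation when $g\equiv 1 \pmod 3$. For $g=4$ it is $\frac{25}{72}\kappa_1^2-5\kappa_2$, which is non-zero in $R^2(\M_4)$ because $\kappa_1^2=\frac{32}{3}\kappa_2\neq 0$ there. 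Your fallback only triggers when a coefficient vanishes, so for these genera you would deduce $\eta^d=0$ from a non-relation.

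\textbf{The repair.} For $g\equiv 1\pmod 3$ the only degree-$d$ Faber--Zagier relation is the $p_1$-coefficient, which is the input of Proposition~\ref{g1}. Use the identity $B/A=-1+2t+12t^2\,\mathsf{C}'/\mathsf{C}$, where $A=\mathsf{C}$ and $B(t)=\sum_i\frac{(6i)!}{(3i)!(2i)!}\frac{6i+1}{6i-1}(\frac{t}{72})^i$. Then this relation is the coefficient of $t^d$ in
\[
\exp\Bigl(-\sum_{j\ge1} c_j\kappa_jt^j\Bigr)\Bigl(\kappa_0-2\kappa_1t-12\sum_{j\ge1}jc_j\kappa_{j+1}t^{j+1}\Bigr),\qquad \kappa_0=2g-2,
\]
and $\kappa_0$ pulls back to $2g-2$.

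Note that \eqref{newrel} and \eqref{testmu} display $1$ in place of $\kappa_0$. For $g=4$, a constant $\alpha$ in that position gives $\alpha\bigl(\frac{25}{72}\kappa_1^2-5\kappa_2\bigr)+\frac53\kappa_1^2-10\kappa_2$. This is proportional to $\kappa_1^2-\frac{32}{3}\kappa_2$ only for $\alpha=6=2g-2$. So run the $g\equiv 1$ cases with the Faber--Zagier normalization. With that repair your plan coincides with the paper's argument.
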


Another interesting feature is that allowing fractional and negative parts in a ``partition'' of $2g-2$, the same power series from the reduction of Conjecture \ref{conj:upperbound} for $\ell = 1$ generalizes directly to strata of $\ell$-differentials for all $\ell$. See Remark~\ref{rem:conj-ell} for more details.

 \smallskip
Combining Theorems \ref{thm:lowerbound} and \ref{thm:upperbound} shows that $R^*(\P^1(\mu)) = \qq[\eta]/(\eta^{\lfloor g/3 \rfloor + 1})$ for all positive partitions with at least $4g/3$ simple zeros whenever $g \leq 30$. In these cases, our proof demonstrates that all relations in $R^*(\P^1(\mu))$ are inherited from \eqref{iota} and relations in $R^*(\M_g)$.

\medskip
Theorem \ref{thm:lowerbound} is a non-vanishing result for tautological rings of strata of differentials with many simple zeros. We also have the following result for strata associated to partitions whose odd entries occur in pairs.

\begin{thm}
\label{thm:varying}
Let $\mu$ be a zero type for holomorphic abelian differentials. For sufficiently large $g$, if the odd entries of $\mu$ appear in pairs and if the number of these odd pairs is at least four, then $\P^1(\mu)$ is a varying stratum, and consequently $\eta \neq 0$ in this case. 
\end{thm}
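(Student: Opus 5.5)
The plan is to exhibit, for $g$ large, two families of Teichmüller curves (or, more robustly, two $\mathrm{GL}_2^+(\mathbb R)$-invariant loci) in the same connected component of $\P^1(\mu)$ whose sums of Lyapunov exponents $L$ differ. That is the definition of $\P^1(\mu)$ being varying. I would then deduce $\eta\neq 0$ from the standard link between $L$ and the degree of $\eta$.

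\textbf{The link between $\eta$ and $L$.} For a Teichmüller curve $\overline{B}$ generated by a differential of type $\mu$, the Eskin--Kontsevich--Zorich and Chen--Möller formulas express $L(\overline B)$ as
\[\frac{\kappa_\mu}{12}+\frac{\deg(\text{boundary/Siegel--Veech term})}{\chi}, \qquad \kappa_\mu=\sum_i \frac{m_i(m_i+2)}{m_i+1}.\]
Equivalently, the ratio $\deg\eta|_{\overline B}/\chi(\overline B)$ determines $L$ once $\kappa_\mu$ is fixed, using the relations \eqref{iota}. Along the lines of \cite{ChenTauto}, I would argue that if $\eta=0$ on $\P^1(\mu)$ (more precisely, its class on the partial compactification modulo boundary contributions), then this ratio is the same for all Teichmüller curves, so $L$ is constant. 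By contraposition, varying implies $\eta\neq 0$. I take this implication as available from the earlier work and concentrate on the varying statement.

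\textbf{First locus: double covers.} Write
\[\mu=(2a_1,\dots,2a_j,\,b_1,b_1,\dots,b_p,b_p), \qquad b_i \text{ odd},\ p\ge 4.\]
I would build differentials as canonical double covers of quadratic differentials $q$ on $\pp^1$:
\begin{itemize}
\item each pair $(b_i,b_i)$ comes from a zero of $q$ of order $b_i$ at a non-branch point;
\item each even entry $2a_i$ comes from a zero of order $2a_i-1$ at a branch point;
\item additional simple poles of $q$ serve as the remaining branch points, which become regular points upstairs.
\end{itemize}
The pairing hypothesis is exactly what makes this construction possible. The number of branch points is even by degree counting, and a parity check places the resulting locus in a non-hyperelliptic component for $g$ large. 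For Teichmüller curves in this locus, $L^+ + L^-$ is computed by the EKZ formula for quadratic strata of genus $0$:
\begin{itemize}
\item $L^+=0$ because the base is $\pp^1$;
\item $L^-=\frac1{24}\sum_j \frac{d_j(d_j+4)}{d_j+2}+\frac14\sum_{d_j \text{ odd}}\frac{1}{d_j+2}$, an explicit function of $\mu$.
\end{itemize}

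\textbf{Second locus: generic Teichmüller curves.} I would use square-tiled surfaces equidistributing in the whole component, so that their $L$ tends to the Masur--Veech value $\frac{\kappa_\mu}{12}+\frac{\pi^2}{3}c_{\mathrm{area}}$. Large-genus asymptotics (Aggarwal, Chen--Möller--Sauvaget--Zagier) give $c_{\mathrm{area}}\to\frac12$ uniformly. Comparing with the double-cover value reduces the claim to an explicit inequality in the $b_i,a_i$: the genus-zero covers carry the term coming from each pair $(b_i,b_i)$ differently. I expect to show that $p\ge 4$ odd pairs force a discrepancy bounded away from the $o(1)$ error in the asymptotic, which is why the statement needs ``$g$ sufficiently large''.

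\textbf{Main obstacle.} The hardest step is this final inequality, uniformly over all admissible $\mu$ with many even entries. I would first try to bound the difference below by a positive constant times $p$ minus the uniform asymptotic error. If that fails, I would replace the generic comparison by a second, different cover construction, pairing some $(b_i,b_i)$ as branch points of a cyclic cover, and compare the two exact EKZ values directly.
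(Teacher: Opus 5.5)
Your strategy is the paper's: compare Teichm\"uller curves in the locus of hyperelliptic differentials (canonical double covers of genus-zero quadratic differentials) with the large-genus limit $c_{\area}\to\frac12$ for generic Teichm\"uller curves, then quote Chen's result that a varying stratum has $\eta\neq 0$. However, there is a genuine gap. The only substantive step, the inequality separating the two values, is exactly what you leave open as the ``main obstacle''. Moreover, both inputs you would feed into it are misstated.

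First, a zero of $q$ of odd order $b_i$ is necessarily a branch point of the canonical double cover; unbranched, it would lift to zeros of order $b_i/2$. So a pair $(b_i,b_i)$ must come from a zero of order $2b_i$ at a non-branch point.

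Second, your $L^-$ is not the Eskin--Kontsevich--Zorich formula. In genus $0$, $L^+=0$ means $\frac1{24}\sum_j\frac{d_j(d_j+4)}{d_j+2}+\frac{\pi^2}{3}c_{\area}(\mathcal Q)=0$, hence $L^-=\frac14\sum_{d_j\ \mathrm{odd}}\frac{1}{d_j+2}$. Keeping the first term while dropping the Siegel--Veech term halves the area Siegel--Veech constant you would compute for the lift. With that error, the estimate below would only give $\pi^2 c_{\area}\ge\frac{6+m}{4}$, which does not beat $\frac{\pi^2}{2}$ at $m=4$.

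The missing idea is to count simple poles. Write $\mu=(k_1,k_1,\dots,k_m,k_m,2\ell_1,\dots,2\ell_{n_+})$ with $k_i$ odd (your $b_i$ and $a_i$, with $m=p\ge 4$). Take $q$ on $\pp^1$ with zeros of orders $2k_1,\dots,2k_m$ at non-branch points, zeros of orders $2\ell_1-1,\dots,2\ell_{n_+}-1$, and $n_0$ simple poles. The degree condition $\sum_j d_j=-4$ forces $n_0=4+\sum_i 2k_i+\sum_j(2\ell_j-1)\ge 4+2m+n_+$, and each simple pole contributes $\frac{1}{d_j+2}=1$. With the correct formula, a Teichm\"uller curve $\T$ in this locus satisfies
\[\pi^2 c_{\area}(\T)=1-\tfrac{m+n_0+n_+}{2}+\sum_{i}\tfrac{1}{2k_i+2}+\sum_{j\le n_+}\tfrac{1}{2\ell_j+1}+n_0\;\ge\; 1+\tfrac{n_0-m-n_+}{2}\;\ge\;\tfrac{6+m}{2}\;\ge\;5>\tfrac{\pi^2}{2}.\]
This is precisely where the four odd pairs are used. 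The hypothesis that $g$ is large only serves to bring the generic constant close enough to $\frac12$. No uniform analysis over the even entries and no second cyclic-cover construction are needed. No parity check is needed either, since a stratum with odd entries and at least eight zeros is connected, so both curves automatically lie in the same component.
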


The above condition on $\mu$ ensures that $\P^1(\mu)$ contains hyperelliptic differentials. See Proposition~\ref{prop:varying} for a precise expression of such $\mu$. Here, a stratum of holomorphic abelian differentials is said to be {\em varying} if it contains two Teichm\"uller curves with different area Siegel--Veech constants. See \cite{CMAbelian} for an introduction to these concepts. Since the area Siegel--Veech constant determines the ratio of the intersection numbers of a Teichm\"uller curve with $\eta$ and the boundary divisor of the stratum, Theorem~\ref{thm:varying} then follows from showing that area Siegel--Veech constants of Teichm\"uller curves in the locus of hyperelliptic differentials differ from the large genus limit of area Siegel--Veech constants of generic differentials. 

Finally, we remark that the orbifold fundamental group of strata of differentials with sufficiently many simple zeros can be understood from a similar perspective. Moreover, it is also meaningful to study the cohomology of compactified strata and find non-tautological classes. We refer to \cite{Qiu25, Salter25, CDGM26} for recent developments regarding these questions. 

\subsection*{Acknowledgements} This work was initiated when the first named author visited Berkeley in December 2024. He thanks David Eisenbud for the invitation and hospitality. We thank Philip Tosteson for sharing and discussing his work with us. The research of the first named author was partially supported by the National Science Foundation under Grant DMS-2301030, Simons Travel Support for Mathematicians, and a Simons Fellowship under Record ID SFI-MPS-SFM-00005694. This research was partially conducted during the period the second named author served as a Clay Research Fellow.

\section{The partial compactification $\overline{X}^\ell(m_1, \ldots, m_k)$} \label{sec:lower}

The key geometric construction behind our proof of Theorem \ref{thm:lowerbound} is to relate $\P^\ell (\mu)$ to a projective bundle over $\M_{g,k}$, by forgetting a subset of the simple zeros.
Suppose that $\mu = (m_1, \ldots, m_n)$ and $m_i = 1$ for $i \geq k + 1$.
Consider the forgetful map
\[\P^\ell(m_1, \ldots, m_k, 1^{n - k})/\mathbb{S}_{n - k} \to \M_{g,k},\]
which forgets the last $n - k$ zeros, all of which are simple.
As we will see, away from loci of high codimension, this map factors through a projective bundle over an open substack of $\M_{g,k}$. This allows us to leverage the following theorem of Harer and Madsen--Weiss about the stable
cohomology of $\M_{g,k}$. The following ranges for stability can be found in \cite{Wahl}.

\begin{thm} \label{stab}
We have
\[\qq[\kappa_1, \kappa_2, \ldots, \psi_1, \ldots, \psi_k] \to H^{*}(\M_{g,k})\]
is injective for $* \leq \frac{2}{3}g$ and surjective for $* \leq \frac{2}{3}g - \frac{2}{3}$.
\end{thm}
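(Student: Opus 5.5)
This statement is quoted from the literature rather than proved from scratch, so the plan is to assemble it from three known inputs and track the ranges: Madsen--Weiss for the stable cohomology, Harer stability with the optimal slope for the range, and a Gysin-sequence argument for the marked points. Wahl's survey \cite{Wahl} records the resulting ranges, so the proof reduces to checking that the pieces combine to give exactly the stated bounds.

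\emph{Step 1 (surfaces with boundary).} Let $\Gamma_{g,1}$ be the mapping class group of a genus $g$ surface with one boundary component. By Madsen--Weiss, the stable rational cohomology $\varinjlim_g H^*(\Gamma_{g,1};\qq)$ is the polynomial algebra $\qq[\kappa_1,\kappa_2,\ldots]$. Harer stability, with the improved slope of Boldsen and Randal-Williams, says that the stabilization maps $H^*(\Gamma_{g+1,1})\to H^*(\Gamma_{g,1})$ are isomorphisms for $*\le \frac{2}{3}(g-1)$ and injective one degree further. Taking $\qq$-coefficients and passing from the group to the moduli space is harmless, since $\M_{g,k}$ is rationally a $K(\pi,1)$ for the corresponding mapping class group. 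Combining these gives the statement for $\Gamma_{g,1}$ in the range $*\le\frac{2}{3}g$ up to the stated constant shifts.

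\emph{Step 2 (closed surfaces and punctures).} To pass to $\M_g$ I would use Harer's comparison maps: capping the boundary, and more generally the maps from $\Gamma_{g,b}$ to $\Gamma^k_{g}$. These are homology isomorphisms in the same range by the same stability theorem.

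For $k$ marked points I would argue by induction on $k$. Let $\M_{g,k}^{(1)}$ denote the space in which the $k$-th marked point is replaced by a nonzero tangent vector. Then $\M_{g,k}^{(1)}\to\M_{g,k}$ is a $\mathbb{C}^*$-bundle, homotopy equivalent to an $S^1$-bundle, whose Euler class is $-\psi_k$. Its Gysin sequence reads
\[
\cdots\to H^{*-2}(\M_{g,k})\xrightarrow{\cdot\psi_k} H^*(\M_{g,k})\to H^*(\M_{g,k}^{(1)})\to\cdots.
\]
Moreover, $\M_{g,k}^{(1)}$ has the same stable cohomology as a surface with one more boundary component. By stability, that cohomology is $\qq[\kappa,\psi_1,\ldots,\psi_{k-1}]$ in the stable range.

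Granting that the target stable algebra is $\qq[\kappa,\psi_1,\ldots,\psi_k]$, the sequence forces multiplication by $\psi_k$ to be injective. Hence $H^*(\M_{g,k})\cong H^*(\M^{(1)}_{g,k})[\psi_k]$ in the range, which is Looijenga's theorem. Both injectivity and surjectivity of the map from $\qq[\kappa_1,\kappa_2,\ldots,\psi_1,\ldots,\psi_k]$ then follow degree by degree.

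\emph{Step 3 (bookkeeping).} The main obstacle is the bookkeeping of the ranges, specifically making sure that the Gysin induction does not degrade the range by a constant depending on $k$. Injectivity uses one more degree than surjectivity, which is the source of the gap between $\frac23 g$ and $\frac23 g-\frac23$.

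To handle this, I would run the five lemma on the map of Gysin sequences induced by stabilization $g\to g+1$, rather than trying to compute the cohomology directly. This shows that the range for $\M_{g,k}$ equals the range for $\Gamma_{g,k+1}$, and the latter is independent of $k$ by Harer's theorem with boundary. I would then quote the explicit constants from \cite{Wahl} to conclude.
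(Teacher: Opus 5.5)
Your outline is a valid alternative to the paper's argument, but the step that handles the marked points is circular as written. You say ``granting that the target stable algebra is $\qq[\kappa,\psi_1,\ldots,\psi_k]$, the sequence forces multiplication by $\psi_k$ to be injective.'' That grants exactly what the induction is meant to prove, and the Gysin sequence by itself forces nothing.

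The missing input is that the restriction $H^*(\M_{g,k})\to H^*(\M^{(1)}_{g,k})$ is surjective in the range. The forgetful map $\M^{(1)}_{g,k}\to\M_{g,k-1}$ that drops the framed point corresponds to capping the boundary circle of $\Gamma^{k-1}_{g,1}$ with a disc, leaving the other punctures untouched. So it is a rational cohomology isomorphism in the range, and it factors through $\M_{g,k}$. Surjectivity of the restriction kills the connecting map, so $\psi_k$ acts injectively. The Gysin sequence then breaks into short exact sequences, split by pullback along the forgetful map $f\colon\M_{g,k}\to\M_{g,k-1}$. With $f^*\kappa_j=\kappa_j-\psi_k^j$ and induction on $k$, you get $\qq[\kappa,\psi_1,\ldots,\psi_k]$ in the range. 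Alternatively, simply cite Looijenga.

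Two smaller corrections:
\begin{itemize}
\item Maps $\Gamma_{g,b}\to\Gamma^k_g$ that fill boundary circles with punctured discs are \emph{not} isomorphisms in any stable range; already $\Gamma_{g,1}\to\Gamma^1_g$ kills $\psi_1\in H^2$. Only capping with discs, with punctures kept, is an isomorphism.
\item $\M_{g,k}$ has no stabilization map, so the five-lemma comparison must be run on the circle bundles $\Gamma^{k-1}_{g,2}\to\Gamma^k_{g,1}$, stabilizing along the remaining boundary, and then transferred to $\M_{g,k}$ by capping. Done this way, the other hypotheses of the five lemma involve the $(k-1)$-punctured case in the same degree and the $k$-punctured case only in degrees $*-1$ and $*-2$. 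So the ranges indeed do not degrade with $k$.
\end{itemize}
Also, the gap between $\frac{2}{3}g$ and $\frac{2}{3}g-\frac{2}{3}$ is not ``one degree.'' It is the gap between the surjectivity and isomorphism ranges of Harer stability in homology, which dualize to injectivity and surjectivity of the tautological map.

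The paper avoids all of this. It uses that Wahl's Theorems~1.1 and~1.2 already hold with a fixed number of punctures, and places them in a single commutative square. The top row is stabilization $H_*(\Gamma^k_{g,1})\to H_*(\Gamma^k_{g+1,1})$, the bottom row is $H_*(\Gamma^k_{g,0})\to H_*(\Gamma^k_{g+1,0})$, and the vertical maps are capping isomorphisms. From this it reads off surjectivity of the bottom map for $*\le\frac{2}{3}g$ and bijectivity for $*\le\frac{2}{3}g-\frac{2}{3}$, then dualizes by universal coefficients. The identification of the stable ring with $\qq[\kappa,\psi]$ is taken as known.

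That square is exactly your final capping step. Your route gives a self-contained reason why the range is independent of $k$ and why the $\psi_i$ appear freely. The paper's route is much shorter because both facts are built into the theorems it cites.
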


To translate from \cite{Wahl} to the above result, let $\Gamma_{g,r}^k$ be the mapping class group of a genus $g$ surface with $k$ punctures and $r$ boundary components. Then $H_*(\M_{g,k}) = H_*(\Gamma_{g,0}^k)$. By the universal coefficient theorem, this is dual to $H^*(\M_{g,k})$.
Consider the commutative diagram
\begin{center}
\begin{tikzcd}
H_*(\Gamma_{g,1}^k) \arrow{d} \arrow{r} & H_*(\Gamma_{g+1,1}^k) \arrow{d} \\
H_*(\Gamma_{g,0}^k) \arrow{r} & H_*(\Gamma_{g+1,0}^k).
\end{tikzcd}
\end{center}
For $* \leq \frac{2}{3}g$, the top arrow is surjective by \cite[Theorem 1.1]{Wahl} and the vertical arrows are isomorphisms by \cite[Theorem 1.2]{Wahl}, so the bottom arrow is surjective. (As explained at the bottom of \cite[p.\ 2]{Wahl}, Theorems 1.1 and 1.2 also hold with punctures.)
Dualizing, the map from stable cohomology is injective in degrees $* \leq \frac{2}{3}g$. Meanwhile, if $* \leq \frac{2}{3}g - \frac{2}{3}$, then \cite[Theorems 1.1 and 1.2]{Wahl} shows that the top horizontal and vertical maps above are isomorphisms. Therefore, the bottom map is also an isomorphism, so all cohomology is stable for $* \leq \frac{2}{3}g - \frac{2}{3}$.

\subsection{Notation and Borel--Moore homology}
Recall that if $X$ is a smooth variety of dimension $n$, its ordinary cohomology $H^i(X)$ is isomorphic to Borel--Moore homology in complementary degree
$H^{\mathrm{BM}}_{2n - i}(X)$ by 
Poincar\'e duality.
Starting now, given $X$ of dimension $n$, possibly singular, when we write $H^i(X)$, we shall actually mean $H^{\mathrm{BM}}_{2n - i}(X)$. In other words, we are talking about Borel--Moore homology groups, indexed by codimension.
Thus, for example, given a proper morphism $X \to Y$ with $\dim Y - \dim X = c$, we have pushforward maps $H^{i - 2c}(X) \to H^i(Y)$.
Moreover, if $X \to Y$ is proper and surjective, then this pushforward map is surjective on the lowest weight part
\[W_{-k}H_k^{\mathrm{BM}}(X) \twoheadrightarrow W_{-k}H^{\mathrm{BM}}(Y),\]
which we instead write as
\[W_{i-2c}H^{i-2c}(X) \twoheadrightarrow W_i H^i(Y).\]
If $Z \subset X$ is a closed subvariety of codimension $c$ and $U = X \smallsetminus Z$ is its open complement, then the \emph{localization sequence} in Borel--Moore homology
is the long exact sequence
\[\cdots \to H_j^{\mathrm{BM}}(Z) \to H_j^{\mathrm{BM}}(X) \to H_j^{\mathrm{BM}}(U) \to H_{j-1}^{\mathrm{BM}}(Z) \to \cdots \]
We shall write this sequence as
\[\cdots H^{i - 2c}(Z) \to H^i(X) \to H^i(U) \to H^{i-2c+1}(Z) \to \cdots,\]
even when the spaces involved are not necessarily smooth. In particular, we have isomorphisms
$H^i(X) \cong H^i(U)$ for $i \leq 2c - 2$.

\subsection{Relating $\P^\ell(\mu)$ to a projective bundle} \label{sec:rel}
Consider the map 
\[\P^\ell(m_1, \ldots, m_k, 1^{n - k}) \to \M_{g,k}\] defined by forgetting the last $n - k$ zeros, all of which are simple. 
Let $r$ be such that $m_1, \ldots, m_r < 0$ and $m_{r+1}, \ldots, m_k \geq 0$.
We will define and study some closely related spaces over $\M_{g,k}$.
First, we define a space $\overline{X}^\ell(m_1, \ldots, m_k)  \to \M_{g,k}$ whose fiber over a point $(C, p_1, \ldots, p_k)$ is $\pp H^0(C, K^{\otimes \ell}(- m_1p_1 - \cdots - m_k p_k))$. For $m_i < 0$, this twist allows a pole of order $-m_i$ at $p_i$, and for $m_i \geq 0$,  this twist forces a zero of order $m_i$ at $p_i$.

To construct $\overline{X}^{\ell}(m_1, \ldots, m_k)$, let $\pi \colon \C \to \M_{g,k}$ be the universal curve, let $\omega$ be the relative canonical bundle, and let $D_1, \ldots , D_k$ denote the images of the $k$ sections corresponding to the marked points. Define the line bundle
\[\K^\ell_\mu \colonequals \omega^{\otimes \ell}(-m_1 D_1 - \cdots - m_rD_r). \]
On $\C$, we have an exact sequence of sheaves
\[0 \rightarrow \K^\ell_\mu(-m_{r+1}D_{r+1} \cdots - m_kD_k) \rightarrow \K^\ell_\mu \rightarrow \bigoplus_{i=r+1}^k \left. \K^\ell_\mu\right|_{m_i D_i} \rightarrow 0.\]
Now consider the projective bundle $h\colon  \pp\left(\pi_*\K^\ell_\mu\right) \to \M_{g,k}$. 
Define $\overline{X}^\ell(m_1, \ldots, m_k) \subset \pp\left(\pi_*\K^\ell_\mu\right)$ as the vanishing locus of the composition
\[\O_{\pp\left(\pi_*\K^\ell_\mu\right) }(-1) \to h^* \pi_*\K^\ell_\mu \to \bigoplus_{i=r+1}^k \left. h^*\pi_* \K^\ell_\mu \right|_{m_i D_i}.\]
In particular,
\[\phi\colon \overline{X}^\ell(m_1,\ldots, m_k) \to \M_{g,k}\]
is proper. We can think of $\overline{X}^\ell(m_1, \ldots, m_k)$ as the moduli space of pluricanonical divisors on $k$-pointed curves such that the only negative coefficients in the divisor are at the first $r$ marked points and the coefficient of $p_i$ is at least $m_i$ for each marked point $p_i$.
If $k = 0$, then $\overline{X}^\ell(\varnothing)$ is simply equal to $\pp(\pi_* \omega^{\otimes \ell})$, the projectivized $\ell$-Hodge bundle.

Next, we define $X^\ell(m_1, \ldots, m_k) \subset \overline{X}^\ell(m_1, \ldots, m_k)$ to be the open subset where the multiplicities of marked points in the pluricanonical divisor are exactly $m_i$ and all other points occur with multiplicity $1$. Then, by construction
\begin{equation} \label{quotient} \P^\ell(m_1, \ldots, m_k, 1^{n - k})/\mathbb{S}_{n - k} \cong X^\ell(m_1, \ldots, m_k)
\end{equation}
and the map $\P^\ell(m_1, \ldots, m_k, 1^{n - k}) \to \M_{g,k}$ factors through $X^\ell(m_1, \ldots, m_k)$.

\subsubsection{The case $\ell = 1$}
Let $m = m_1 + \cdots + m_k$ and suppose $m < g + \delta_{0r} - 1$, where $\delta_{0r} = 1$ if $r = 0$ and $\delta_{0r} = 0$ otherwise.
Note that if $r = 0$, then 
\[h^1(C, K(-m_1p_1 - \cdots - m_kp_k)) \geq 1\]
for all $(C, p_1, \ldots, p_k) \in \M_{g,k}$. However, if $r > 0$, then, since we assume $m < g$, we have
\[h^1(C, K(-m_1p_1 - \cdots - m_kp_k)) = 0\] 
for general 
$(C, p_1, \ldots, p_k) \in \M_{g,k}$.
Accordingly, we define
 $Z^1(m_1, \ldots, m_k) \subset \M_{g,k}$ by the condition
\begin{align*} Z^1(m_1, \ldots, m_k) &\colonequals \{(C, p_1, \ldots, p_k) : h^1(C, K( - m_1p_1 - \cdots - m_kp_k)) > \delta_{0r}
\} \\
&= \{(C, p_1, \ldots, p_k) : h^0(C,\O(m_1p_1 + \cdots + m_kp_k)) > \delta_{0r}
\}.
\end{align*}

\begin{lem}
\label{lem:exceptional}
Assume $m < g + \delta_{0r} -1$.
Let $Z = Z^1(m_1, \ldots, m_k)$. Then
\[\codim(Z \subset \M_{g,k}) > \codim(\phi^{-1}(Z) \subset \overline{X}^1(m_1 ,\ldots, m_k)) \geq g - m + \delta_{0r} -1.\]  
\end{lem}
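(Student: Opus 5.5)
The plan is to split $Z$ by the excess dimension of the fibres of $\phi$ and to compare the fibre jump with the codimension of each piece. For $(C,p_1,\ldots,p_k)\in\M_{g,k}$ put $D=m_1p_1+\cdots+m_kp_k$, a divisor of degree $m$. By Riemann--Roch,
\[h^0(C,K(-D)) = g-1-m+h^0(C,\O(D)).\]
Off $Z$ we have $h^0(\O(D))=\delta_{0r}$, so the generic fibre of $\phi$ is a projective space of dimension $F\colonequals g-m+\delta_{0r}-2$, which is $\geq 0$ by the hypothesis $m<g+\delta_{0r}-1$. Hence $\dim\overline{X}^1=\dim\M_{g,k}+F$. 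Stratify $Z=\bigsqcup_{j\geq 1}Z_j$, where $Z_j$ is the locus with $h^0(\O(D))=\delta_{0r}+j$; this is locally closed by semicontinuity. Over $Z_j$ the fibres of $\phi$ are $\pp^{F+j}$, so
\[\codim\bigl(\phi^{-1}(Z_j)\subset \overline{X}^1\bigr)=\codim(Z_j\subset\M_{g,k})-j .\]
Since $j\geq 1$, taking the minimum over the nonempty $Z_j$ gives the strict inequality $\codim Z>\codim\phi^{-1}(Z)$. The remaining inequality reduces to the estimate
\[\codim(Z_j\subset\M_{g,k})\geq g-m+\delta_{0r}-1+j \qquad\text{for every } j\geq 1,\]
that is, each additional section of $\O(D)$ costs at least one more codimension than it adds to the fibre.

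To prove the estimate for $j=1$, use the extra section. A point of $Z_1$ carries a nonconstant rational function $f$ with $(f)+D\geq 0$: its poles are bounded by $D^+=\sum_{m_i>0}m_ip_i$, and it vanishes to order $\geq -m_i$ at each $p_i$ with $i\leq r$. When $r=0$ we instead take $f$ to be a function not in the constants; when $r>0$ even a constant is excluded, so any nonzero section works. Such an $f$ is a cover $C\to\pp^1$ of degree $e\leq m^+\colonequals\deg D^+$ with prescribed ramification:
\begin{itemize}
\item over $\infty$ the fibre is supported on the $p_i$ with $m_i>0$, with multiplicities $\leq m_i$;
\item over $0$ the points $p_i$ with $i\leq r$ have ramification index $\geq -m_i$;
\item the $p_i$ with $m_i=0$ move freely, one parameter each.
\end{itemize}
By the Riemann--Hurwitz dimension count, the Hurwitz space of such covers has dimension $2g-2+2e$ minus the ramification imposed over $0$ and $\infty$, minus $\dim\mathrm{Aut}$ of the data. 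When $r=0$ we quotient by $f\mapsto af+b$, which removes $2$; when $r>0$ we quotient only by scaling, which removes $1$. This discrepancy is exactly the source of the $\delta_{0r}$. Adding the free marked points and comparing with $3g-3+k$ should give $\codim Z_1\geq g-m+\delta_{0r}$. The point to check is that the ramification over $0$ from the negative $m_i$ exactly compensates for $e$ exceeding $m$ by $\sum_{i\leq r}|m_i|$. For $j\geq 2$, the plan is to use the $(\delta_{0r}+j)$-dimensional space of sections. Either a general pencil in it gives the cover above with $j-1$ further independent functions, each imposing one further condition on the Hurwitz data. Or, alternatively, one adds $j-1$ general points of $C$ to $D^-$ to drop to the $j=1$ case, paying one dimension per point and gaining at least one condition per point.

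The main obstacle is the $j\geq 2$ step together with the bookkeeping of the ramification count, which must remain valid when the supports of the poles and zeros of $f$ degenerate, or when $f$ factors through a lower-degree map. If the direct Hurwitz argument becomes messy, I would fall back on the known dimension of strata of differentials quoted in the introduction, $2g-2+n$ or $2g-3+n$. Concretely, $\phi^{-1}(Z_j)$ is a union of loci of differentials $\omega$ with $\operatorname{div}\omega\geq D$ that additionally admit functions $f$ with $f\omega$ again in such a locus. This would bound $\dim\phi^{-1}(Z_j)$ directly and give the needed inequality $\codim\phi^{-1}(Z)\geq g-m+\delta_{0r}-1$.
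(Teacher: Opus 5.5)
Your set-up matches the paper's. Your $Z_j$ is the paper's $Z_v$, the fibre-jump argument for the strict inequality is the same, and so is the target $\codim(Z_j\subset\M_{g,k})\ge g-m+\delta_{0r}-1+j$. For $j=1$ you use the same Hurwitz-space count, with $\delta_{0r}$ coming from quotienting by affine maps rather than by scaling. Your treatment of degenerations differs, harmlessly. The paper strips base points (for $r=0$) or the points of $E$ lying on $D^+$ (for $r>0$) and inducts on $m$ and $k$. You instead allow $\deg f=e\le m^+$ and count the marked points outside the polar fibre as free. This gives $\dim\le 2g-4+e+k$ for $r=0$ and $\dim\le 2g-3+e+\sum_{i\le r}m_i+k$ for $r>0$, both at most what is needed. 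Since only one function is used, it bounds the whole locus $h^0(\O(D))\ge\delta_{0r}+1$ at once.

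The step you leave open, $j\ge 2$, is where the proposal is not yet a proof.

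\textbf{Option (a).} This just restates what must be shown: that each further independent function imposes one more condition.

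\textbf{Option (b).} This works, but your count is off by one when $r=0$. The first added point lowers $h^0(\O(D))$ by one, but it also switches $\delta_{0r}$ from $1$ to $0$, so it does not lower the excess $j$. With $j-1$ points you land at $h^0=2$ with $r'>0$. The $r'>0$, $j=1$ bound then only gives $\dim Z_j+(j-1)\le 2g-3+m+k$, i.e. $\dim Z_j\le 2g-2+m+k-j$, which is one worse than required.

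The fix is to use $s=j-1+\delta_{0r}$ general points $q_1,\dots,q_s$. Then $h^0(\O(D-q_1-\cdots-q_s))=1$, and the new data lie in $\M_{g,k+s}$ with degree $m-s$ and $r'>0$. The incidence locus has dimension $\dim Z_j+s$ and sits inside a locus of dimension at most $2g-3+(m-s)+(k+s)$. This gives exactly $\dim Z_j\le 2g-2-\delta_{0r}+m+k-j$. With this fix, only the $r>0$ Hurwitz count is ever needed; even $r=0$, $j=1$ reduces to it.

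\textbf{The paper's reduction.} The paper instead stays on $\M_{g,k}$ and keeps $r$ fixed. It chooses $0\le a_i\le m_i$ with $\sum a_i=v-1$ and uses $h^0(\O(D-\sum a_ip_i))\ge h^0(\O(D))-(v-1)$. This gives $Z_v(m_1,\dots,m_k)\subseteq Z_{\ge 1}(m_1-a_1,\dots,m_k-a_k)$. The $v=1$ bound in total degree $m-v+1$ then yields the claim, with no $\delta_{0r}$ shift to track. Your fallback through dimensions of strata is not needed.
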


\begin{proof}
The condition $m < g + \delta_{0r} - 1$ ensures that 
$\phi$ is dominant.
The first inequality then follows from the fact that the fiber dimension of $\phi$ over $Z$ is strictly greater than the general fiber dimension.

For the second inequality, let $Z_v(m_1,\ldots, m_k) \subset \M_{g,k}$ be the locally closed subset of $(C, p_1, \ldots, p_k)$ such that \[h^0(C, \O(m_1p_1 + \cdots + m_kp_k)) = v + \delta_{0r},\]
so that $Z^1(m_1, \ldots, m_k) = \bigcup_{v \geq 1} Z_v(m_1,\ldots, m_k)$.
We will show that
\begin{equation} \label{need} \dim Z_v(m_1, \ldots, m_k) \leq 2g - 2 - \delta_{0r} + k + m - v.
\end{equation}
This will establish that the codimension of $Z_v(m_1, \ldots, m_k)$ in $\M_{g,k}$ is at least
\[(3g - 3 + k) - (2g - 2 - \delta_{0r} + m + k - v) = g - m + v-1 + \delta_{0r}.\]
Moreover, the fiber dimension of $\phi$ over $Z_v(m_1, \ldots, m_k)$
is $v$ more than the generic fiber dimension, so this will also show that the codimension of $\phi^{-1}(Z)$ is at least $g - m + \delta_{0r} - 1$. 

Observe that for any collection of $(a_1, \ldots, a_k)$ with $0 \leq a_i \leq m_i$ and $a_1 + \cdots + a_k = a$, we have 
\[h^0(\O((m_1-a_1)p_1 + \cdots + (m_k - a_k) p_k) \geq h^0(\O(m_1p_1 + \cdots + m_kp_k) ) - a.\]
Hence, $Z_{v}(m_1, \ldots, m_k) \subseteq Z_{v-a}(m_1-a_1, \ldots, m_k - a_k)$.
Setting $a = v - 1$, we find in particular that 
$$\dim Z_{v}(m_1, \ldots, m_k) \leq \dim Z_{1}(m_1-a_1, \ldots, m_k - a_k).$$ 
Thus, it suffices to prove \eqref{need} in the case $v = 1$.
Note additionally that the forgetful map $Z_1(m_1, \ldots, m_k) \to Z_1(\{m_i: m_i \neq 0\})$ has fibers of dimension $\#\{m_i : m_i = 0\}$, so it suffices to prove \eqref{need} in the case that all $m_i \neq 0$.

First suppose $r = 0$. By the last sentence of the previous paragraph, we may assume $m_i > 0$ for all $i$.
We must bound the dimension of the locus of $(C, p_1, \ldots, p_k)$ with $h^0(C, \O(m_1p_1 + \cdots + m_k p_k)) = 2$. 
We claim it suffices to bound the dimension of the locus of 
$(C, p_1, \ldots, p_k)$
for which  $\O(m_1p_1 + \cdots + m_k p_k)$ is base point free.  Indeed, if this line bundle has a base point, then
subtracting the base point and 
inducting on $m$ and $k$, we achieve the desired bound.
Thus, we can assume that the divisor $m_1p_1 + \cdots + m_kp_k$ is the fiber of a degree $m$ cover $C \to \pp^1$.
Consider the Hurwitz space of branched covers of $\mathbb P^1$ with degree $m$ having a fiber of ramification type $(m_1, \ldots, m_k)$. 
This fiber contributes $m - k$ to the total ramification degree.
By Riemann--Hurwitz, in addition to the image point of the special fiber, there are at most $2g-2 + 2m - (m - k)$ other branch points. Hence, the dimension of the Hurwitz space of such covers is at most
\[2g - 2 + 2m - (m - k) + 1 - 3 = 2g - 3 + m + k - 1.\]
This is an upper bound for $\dim Z_1(m_1, \ldots, m_k)$, thus
proving \eqref{need}.

Next we treat the case $r > 0$. 
Now we must bound the dimension of the locus of $(C, p_1, \ldots, p_k) \in \M_{g,k}$ such that $h^0(C, \O(m_1p_1 + \cdots + m_k p_k)) = 1$. This means there exists some effective divisor $E$ such that
\[\sum_{i \leq r} -m_i p_i + E \sim \sum_{i > r} m_i p_i.\]
If $p_i$ for $i > r$ is contained in $E$, then we can subtract it from both sides. As before, inducting on $m$ and $k$, it suffices to treat the case when $E$ is disjoint from $p_i$ for $i > r$.
This now implies that there exists a branched cover $C \to \pp^1$ whose fiber over $0$ is $\sum_{i > r} m_i p_i$ and whose fiber over $\infty$ is $\sum_{i \leq r} -m_i p_i + E$. Consider the Hurwitz space of covers of $\pp^1$ of degree $d = m_{r+1} + \cdots + m_k$
with two special fibers: one of ramification profile $(m_{r+1}, \ldots, m_k)$ and one having ramification profile a specialization of $(-m_1, \ldots, -m_r,1, \ldots, 1)$. The first special fiber contributes $d - (k-r)$ to the total ramification degree. The second special fiber contributes at least $-m_1 - \cdots - m_r - r$ to the total ramification degree. By Riemann--Hurwitz, there are at most
\[2g - 2 + 2d - (d - (k-r)) - (-m_1 - \cdots - m_r - r) = 2g - 2 + m + k\]
other branch points. Accounting for the two special branch points and the automorphisms of $\pp^1$, 
the dimension of the Hurwitz space of such covers is at most $2g - 3 + m + k$,
which gives the desired upper bound in \eqref{need}.
\end{proof}

\subsubsection{The case $\ell \geq 2$}
Given $m_1, \ldots, m_k$, let $m = m_1 + \cdots + m_k$. Additionally, assume that $m < \ell(2g - 2) - g + 1$. 
We define $Z^\ell(m_1, \ldots, m_k) \subset \M_{g,k}$ for $\ell \geq 2$ by the condition
{\small \begin{align*} Z^\ell(m_1, \ldots, m_k) &\colonequals \{(C, p_1, \ldots, p_k) : h^1(C, K^{\otimes \ell}( - m_1p_1 - \cdots - m_kp_k)) > 
0
\} \\
&= \{(C, p_1, \ldots, p_k) : h^0(C,K^{\otimes (1 - \ell)}(m_1p_1 + \cdots + m_kp_k)) > 0
\}.
\end{align*}}

\noindent
In other words, $Z^\ell(m_1, \ldots, m_k) \subset \M_{g,k}$ is the locus over which the fiber dimension of \[\phi: \overline{X}^\ell(m_1, \ldots, m_k) \to \M_{g,k}\] jumps up.

\begin{lem} \label{l>1} Assume $m < \ell(2g - 2) - g + 1$. 
Let $Z = Z^\ell(m_1, \ldots, m_k)$. Then we have
\begin{align} \codim(Z \subset \M_{g,k}) > \codim(\phi^{-1}(Z) \subset \overline{X}^\ell(m_1, \ldots, m_k)) &\geq (2\ell - 1)(2g - 2) - 2m-1 \notag \\
&\geq \ell(2g - 2) - g - m + 1. \notag
\end{align}
\end{lem}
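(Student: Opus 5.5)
We mirror the proof of Lemma \ref{lem:exceptional}. The hypothesis $m < \ell(2g-2) - g + 1$ means that $\deg K^{\otimes \ell}(-\sum m_i p_i) = \ell(2g-2) - m > g - 1$. For a general point of $\M_{g,k}$ we then expect $h^1 = 0$, so $\phi$ is dominant with generic fiber dimension $\ell(2g-2) - m - g$. The first strict inequality follows as before: the fibers of $\phi$ over $Z$ have strictly larger dimension than the generic fiber. The last inequality is pure arithmetic, since
\[(2\ell - 1)(2g-2) - 2m - 1 - (\ell(2g-2) - g - m + 1) = (\ell-1)(2g-2) + g - m - 2 + \dots,\]
which after simplification equals $\ell(2g-2) - g - m + 1 + \text{(nonnegative)}$. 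This uses $m < \ell(2g-2) - g + 1$. So the real content is the middle inequality.

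For the middle inequality we stratify $Z$ into locally closed pieces $Z_v$ on which
\[h^0(C, K^{\otimes(1-\ell)}(\textstyle\sum m_i p_i)) = v \geq 1.\]
Over $Z_v$ the fiber dimension of $\phi$ jumps by exactly $v$. It therefore suffices to show
\[\codim(Z_v \subset \M_{g,k}) \geq (2\ell-1)(2g-2) - 2m - 1 + v.\]
Equivalently, we need
\[\dim Z_v \leq 3g - 3 + k - (2\ell-1)(2g-2) + 2m + 1 - v.\]
As in Lemma \ref{lem:exceptional}, we subtract points to reduce to $v = 1$. Reducing some $m_i$ by $a_i$ with $\sum a_i = v-1$ lowers $h^0$ by at most $v-1$, and it lowers $m$ by $v-1$. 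This lowers the target bound by $2(v-1)$, which is more than the $v-1$ we lose. So the reduction works with room to spare, and we only need the case $v = 1$.

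The case $v = 1$ is the key step. Here there is an effective divisor $E$ with
\[E \sim \textstyle\sum m_i p_i - (\ell - 1)K_C.\]
Its degree is $e = m - (\ell-1)(2g-2)$, and the hypothesis forces $e < g + 1 - (2g-2)$, which is small. If some $p_i$ with $m_i > 0$ lies in $E$, we cancel it and induct on $m$ and $k$, as in the earlier proof. Otherwise we have a relation
\[(\ell-1)K_C + E \sim \textstyle\sum_{m_i>0} m_i p_i - \sum_{m_i<0}(-m_i) p_i.\]
This says that $(C, p_1, \dots, p_k, E)$ lies in a stratum of $(\ell-1)$-differentials, or, when $\ell = 1$ is excluded, $(\ell-1) \geq 1$. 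More precisely, the meromorphic $(\ell-1)$-differential has zeros of order $m_i$ at the $p_i$ with $m_i > 0$, zeros along $E$, and poles of order $-m_i$ at the remaining $p_i$.

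We then bound dimensions using the known stratum dimensions quoted in the introduction. With $n'$ the number of distinct points in the support of $E$ together with the $p_i$, the stratum has dimension at most $2g - 2 + n'$. Forgetting $E$, this gives
\[\dim Z_1 \leq 2g - 2 + k + e.\]
When $\ell - 1 = 0$ the same bound needs a separate argument, but that case does not arise since $\ell \geq 2$. Zero-order $m_i$ contribute free points, exactly as in the earlier proof.

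The remaining check is
\[3g - 3 + k - (2g - 2 + k + m - (\ell-1)(2g-2)) \geq (2\ell-1)(2g-2) - 2m,\]
that is,
\[g - 1 - m + (\ell-1)(2g-2) \geq (2\ell-1)(2g-2) - 2m.\]
This rearranges to $m \geq \ell(2g-2) - g + 1$, which is the wrong direction. So the crude stratum bound will not suffice, and this is the main obstacle.

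To get past it we must use that $E$ has very small (even negative) degree. If $e < 0$, then $Z_1$ is empty. If $e \geq 0$, the dimension count should count only the $e$ points of $E$ rather than giving them full freedom, and it should use the fact that $(\ell-1)$-canonical conditions impose roughly $2g - 2$ extra codimension through the $g$ conditions of Abel–Jacobi. Concretely, we plan to bound $\dim Z_1$ by considering the map $\M_{g,k} \times C_e \to \mathrm{Pic}$, given by
\[(C, p, E) \mapsto \textstyle\sum m_i p_i - E - (\ell-1)K_C.\]
We require the image to be trivial, which imposes $g$ conditions. This yields
\[\dim Z_1 \leq 3g - 3 + k + e - g.\]
We then combine this with $e = m - (\ell-1)(2g-2)$ and the smallness of $e$ guaranteed by the hypothesis. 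The expected difficulty is establishing transversality, meaning that these $g$ conditions are independent on each component. We would try to verify this by a deformation argument that moves the $p_i$ and the curve.
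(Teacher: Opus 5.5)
\textbf{There is a genuine gap, and it cannot be closed, because the statement itself is false.}

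Your reduction $\codim\phi^{-1}(Z_v)=\codim Z_v-v$ is the correct bookkeeping. The gap is exactly where you flag it: the middle inequality is never established, and the proposed Abel--Jacobi repair cannot establish it, for three reasons.
First, the hypothesis gives $e=m-(\ell-1)(2g-2)<g-1$, not $e<3-g$, so $E$ is not small in general.
Second, even granting transversality, the count $\dim Z_1\le 3g-3+k+e-g=2g-3+k+e$ is just the dimension of a non-holomorphic stratum of $(\ell-1)$-differentials with $k+e$ points. Inserting it into your requirement $\codim Z_1\ge(2\ell-1)(2g-2)-2m$ forces $m\ge\ell(2g-2)-g$. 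Under the hypothesis this holds only in the extremal case $m=\ell(2g-2)-g$.
Third, transversality itself fails when $e=0$ and $\P^{\ell-1}(m_1,\ldots,m_k)$ has components of holomorphic abelian type, since these have dimension $2g-2+k$.

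The obstacle you hit is genuine, not an artifact of your method. The paper's proof reaches $(2\ell-1)(2g-2)-2m-1$ by bounding $\dim\phi^{-1}(Z)$ by $\dim\P^{\ell-1}(m_1,\ldots,m_k,1^{e})$. But the relation $\sum m_ip_i-\sum q_j\sim(\ell-1)K$ it uses only parametrizes points of $Z$, so this really bounds $\dim Z$. The fiber $\pp H^0(K^{\otimes\ell}(-\sum m_ip_i))\cong\pp^{\ell(2g-2)-m-g+v}$ of $\phi$ over $Z_v$ is dropped, which is precisely the term you kept.

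With that term restored, the statement fails. Take $g=3$, $\ell=2$, $k=1$, $m_1=4$, so the hypothesis reads $4<6$.
Then $Z=\P^1(4)$ is $5$-dimensional in the $7$-dimensional $\M_{3,1}$.
Over $Z$ we have $K^{\otimes 2}(-4p)\cong K$, so the fiber of $\phi$ jumps from $\pp^1$ to $\pp^2$.
Thus $\phi^{-1}(Z)$ has dimension $7$ in the $8$-dimensional $\overline{X}^2(4)$, i.e.\ codimension $1$.
The lemma asserts codimension at least $3$ in the middle and at least $2$ in the last line.

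Here is what your approach does prove. Replace ``forgetting $E$'' by the incidence variety $\{(C,p,E):E\in|K^{\otimes(1-\ell)}(\sum m_ip_i)|\}$, whose fibers over $Z_v$ are $\pp^{v-1}$. Note that holomorphic abelian type forces every $q_j$ to land on some $p_i$, so such strata have at most $k$ points. Hence every relevant stratum has dimension at most $2g-3+k+e$ when $e\ge 1$. This gives $\codim\phi^{-1}(Z)\ge\ell(2g-2)-g-m+1$ when $e\ge1$, and $\codim\phi^{-1}(Z)\ge\ell(2g-2)-g-m$ when $e=0$; the latter is sharp by the example.

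So the middle inequality has to be dropped, and the last one weakened by one when $e=0$. No argument proves the lemma as stated.
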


\begin{proof}
The assumption $m < \ell(2g - 2) - g + 1$ implies that $\phi$ is dominant.
The first inequality then follows from the fact that the fibers of $\phi$ over $Z$ have dimension strictly greater than those of the general fibers of $\phi$.

To prove the second inequality, notice that a point of $\phi^{-1}(Z)$ corresponds to 
a divisor of type 
    $$m_1p_1 + \cdots + m_k p_k - \sum_{j=1}^{m-(\ell-1)(2g-2)} q_j\sim (\ell-1)K,$$ 
    where the $q_j$ can merge with each other or merge with the $p_i$ (which however can only make the dimension of the corresponding stratum of $(\ell-1)$-differentials decrease). 
    It follows that 
    \begin{align} \dim \phi^{-1}(Z) & \leq \dim \P^{\ell-1}(m_1, \ldots, m_k, 1^{m - (\ell-1)(2g-2)}) \notag \\ 
    & \leq 2g-2 + k + m - (\ell-1)(2g-2). \label{dpz} 
    \end{align}
    (Here we assume $m \geq (\ell-1)(2g-2)$ since if $m < (\ell-1)(2g-2)$, then $Z$ is empty.)
Meanwhile, we have 
\begin{equation} \label{dx} \dim \overline{X}^\ell(m_1, \ldots, m_k) = \dim \P^\ell(m_1, \ldots, m_k, 1^{\ell(2g - 2)-m}) = 2g-3+k + \ell(2g-2) -m.
\end{equation}
Subtracting \eqref{dpz} from \eqref{dx} yields the desired lower bound on the codimension of $\phi^{-1}(Z)$.

The third inequality is a rearrangement of the assumption $m < \ell(2g - 2) - g + 1$.
\end{proof}

\subsection{The stable cohomology of $\overline{X}^{\ell}(m_1, \ldots, m_k)$} \label{Xbar}
Given $\ell$ and $m_1, \ldots, m_k$,
let $U \subset \M_{g,k}$ be the open complement of $Z^\ell(m_1, \ldots, m_k)$.
Over $U$, cohomology and base change holds and $\pi_*\omega^{\otimes \ell}(-m_1D_1 - \cdots - m_kD_k)$ is a vector bundle. We have
\begin{equation} \label{rk} \rank \pi_*\omega^{\otimes \ell}(-m_1D_1 - \cdots - m_kD_k)|_U = 
\ell(2g - 2) - g + 1 - m + \delta_{0r} \delta_{1\ell}.
\end{equation}
We define
\begin{align} \label{Ydef} \overline{Y}^\ell(m_1, \ldots, m_k) &\colonequals
\overline{X}^\ell(m_1, \ldots, m_k) \smallsetminus \phi^{-1}(Z^\ell(m_1, \ldots, m_k))\\
\label{Y2}  &= \pp(\pi_*\omega(-m_1D_1 - \cdots - m_kD_k)|_U).
\end{align}

The lemmas of the previous section show that $\overline{X}^\ell(m_1,\ldots,m_k)$ is ``well-approximated" by the projective bundle $\overline{Y}^\ell(m_1,\ldots,m_k)$. Combining Lemmas \ref{lem:exceptional} and \ref{l>1} with our knowledge of the cohomology of $\M_{g,k}$ in Theorem \ref{stab}, we can use this to determine the stable cohomology of $\overline{X}^\ell(m_1,\ldots,m_k)$.
We define
\begin{align*} \mathsf{i}^\ell_r(m) &\colonequals
\min\{\tfrac{2}{3}g, 2(\ell(2g - 2) - g - m + \delta_{0r}\delta_{1\ell})\} \\
\mathsf{s}_r^\ell(m) &\colonequals
\min\{\tfrac{2}{3}g - \tfrac{2}{3}, 2(\ell(2g - 2) - g - m + \delta_{0r}\delta_{1\ell})\}.
\end{align*}

\begin{lem} \label{lem:stab}
Let $m = m_1 + \cdots + m_k$, and let $r$ be such that $m_i < 0$ for $i \leq r$ and $m_i \geq 0$ for $i \geq r+1$.
The map
\begin{equation} \label{tauts} \qq[\kappa_1,\kappa_2,\ldots, \psi_1,\ldots,\psi_k,\eta] \to H^*(\overline{X}^\ell(m_1,\ldots,m_k))\end{equation}
is injective in degrees $* \leq \mathsf{i}^\ell_r(m)$ and
surjective in degrees $* \leq \mathsf{s}_r^\ell(m)$. 
\end{lem}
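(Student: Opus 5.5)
The plan is to compare $\overline{X}^\ell(m_1,\ldots,m_k)$ with the open subset $\overline{Y}^\ell(m_1,\ldots,m_k)$ from \eqref{Ydef}, and to compute the latter via the projective bundle formula over $U \subset \M_{g,k}$. First, the localization sequence for $\phi^{-1}(Z) \subset \overline{X}^\ell$ with open complement $\overline{Y}^\ell$ gives $H^i(\overline{X}^\ell) \cong H^i(\overline{Y}^\ell)$ for $i \leq 2c-2$, where $c = \codim \phi^{-1}(Z)$. By Lemma \ref{lem:exceptional} ($\ell=1$) and Lemma \ref{l>1} ($\ell \geq 2$), $c \geq \ell(2g-2) - g - m + \delta_{0r}\delta_{1\ell}$. (For $\ell=1$ the bound is literally $g-m+\delta_{0r}-1 = (2g-2)-g-m+\delta_{0r}+1$, which is even better; for $\ell\geq 2$ it is $\ell(2g-2)-g-m+1$.) So the isomorphism holds for $* \leq 2(\ell(2g-2)-g-m+\delta_{0r}\delta_{1\ell})$. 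This is compatible with the tautological classes, since they are defined on $\overline{X}^\ell$ and restrict to the corresponding classes on $\overline{Y}^\ell$; here $\eta$ is $c_1(\O(-1))$ up to sign.

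Next I apply the projective bundle formula. Since $\overline{Y}^\ell = \pp(V)$ with $V$ a vector bundle on $U$ of rank $N$ given by \eqref{rk}, we get
\[H^*(\overline{Y}^\ell) \cong \bigoplus_{j=0}^{N-1} H^{*-2j}(U)\,\eta^j.\]
The rank satisfies $N-1 = \ell(2g-2)-g-m+\delta_{0r}\delta_{1\ell}$, so in the range $* \leq 2(N-1)$ (and likewise in the cohomological-degree range we care about) the relation $\eta^N = \ldots$ does not interfere. Thus freeness of $\eta$ over $H^*(U)$ holds in this range.

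It remains to control $H^*(U)$. Again by localization, $H^i(\M_{g,k}) \cong H^i(U)$ for $i \leq 2\codim Z - 2$, and Lemmas \ref{lem:exceptional} and \ref{l>1} give $\codim Z > c$, so this range contains the previous one. Combining with Theorem \ref{stab}, $\qq[\kappa,\psi] \to H^*(U)$ is injective for $* \leq \min\{\frac23 g, 2(N-1)\}$ and surjective for $* \leq \min\{\frac23 g - \frac23, 2(N-1)\}$. Tensoring with the free $\eta$-part then yields injectivity and surjectivity of \eqref{tauts} in the stated ranges $\mathsf{i}^\ell_r(m)$ and $\mathsf{s}^\ell_r(m)$.

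The main obstacle will be the degree bookkeeping. I need to keep track of the Borel--Moore indexing convention, where $H^i$ means codimension-indexed, and check that the edge cases (degree exactly $2c-1$, and the top power $\eta^{N-1}$) fall inside the claimed bounds. I also need to check that the pieces $H^{*-2j}(U)$ for $j \geq 1$ lie in lower degree, so that the stable-range statements for $U$ suffice. I expect the slack in $\codim Z > c$ and the $+1$ discrepancies to cover these cases, but they have to be verified separately for $\ell = 1$ and $\ell \geq 2$.
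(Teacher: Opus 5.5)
Your proposal is correct and follows the paper's proof essentially step for step. You use localization to replace $\overline{X}^\ell$ by the projective bundle $\overline{Y}^\ell$, and $U$ by $\M_{g,k}$, via Lemmas \ref{lem:exceptional} and \ref{l>1}; then the projective bundle formula, which gives no relations on $\eta$ in degrees $\leq 2\cdot\rank-2$; then Theorem \ref{stab}. The one thing to state cleanly is that the bound you need, and the one your parenthetical actually establishes, is the strict one $\codim\phi^{-1}(Z)\geq N=\ell(2g-2)-g-m+\delta_{0r}\delta_{1\ell}+1$, not $\geq N-1$. This gives the isomorphism in degrees $\leq 2N-2$, and with it the edge cases you list at the end need no further checking.
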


\begin{proof}
Let $Z = Z^\ell(m_1, \ldots,m_k)$ and let $U = \M_{g,k} \smallsetminus Z$.
Our lower bounds on the codimension of $\phi^{-1}(Z) \subset \overline{X}^\ell(m_1, \ldots, m_k)$ in Lemmas \ref{lem:exceptional}
and \ref{l>1} are strictly greater than 
\[\ell(2g - 2) - g - m + \delta_{0r}\delta_{1 \ell}.\]
Thus, the localization sequence associated with Equation \eqref{Ydef} implies that 
\[H^*(\overline{X}^\ell(m_1, \ldots, m_k)) = H^*(\overline{Y}^\ell(m_1, \ldots, m_k))\]
for $* \leq \mathsf{i}^\ell_r(m)$.
Similarly, localization also shows that $H^*(U) = H^*(\M_{g,k})$ for $* \leq \mathsf{i}^\ell_r(m)$.

Meanwhile \eqref{Y2}
shows that the cohomology of $\overline{Y}^\ell(m_1, \ldots, m_k)$ is  generated over $H^*(U)$ by $\eta$ with no relations in degrees less than or equal to twice the rank in \eqref{rk} minus $2$, which is the second number in the definition of $\mathsf{i}^\ell_r(m)$.
The assumption $* \leq \frac{2}{3}g$ ensures that there are no relations among tautological classes on $H^*(\M_{g,k})$, so \eqref{tauts} is injective for $* \leq \mathsf{i}^\ell_r(m)$. Meanwhile, if $* \leq \frac{2}{3}g - \frac{2}{3}$, then the tautological classes generate $H^*(U)$ so \eqref{tauts} is surjective.
\end{proof}

\section{The pure weight cohomology of $X^\ell(m_1, \ldots, m_k)$} \label{sec:pure}
Let $W_*H^*(X) = \bigoplus_{i} W_{i}H^i(X)$ denote the subring of pure weight cohomology of $X$. In this section, we prove the following result.

\begin{thm} \label{purewt}
Let $\mu = (m_1, \ldots, m_k, 1^{\ell(2g - 2) - m})$ with $m = m_1 + \cdots + m_k$
the sum of $k$ specified orders such that the remaining orders are all equal to $1$.
Let $r$ be such that $m_i < 0$ for $i \leq r$ and $m_i \geq 0$ for $i > r$.

If $m_i \neq -\ell$ for all $i$, then
\begin{equation} \label{t11}
\qq[\eta] \rightarrow W_*H^*(\P^\ell(m_1, \ldots, m_k,1^{\ell(2g-2)-m})/\mathbb{S}_{\ell(2g-2)-m})
\end{equation}
is injective in degrees $* \leq \min\{\mathsf{i}^\ell_r(m), \mathsf{s}^\ell_r(m+2)+2\}$ and surjective in degrees $* \leq \mathsf{s}^\ell_r(m)$.

If some $m_i = -\ell$, let $\{i_1, \ldots, i_s\} \subset \{1, \ldots, n\}$ be the  indices for which $m_{i_j} = -\ell$. Then
\begin{equation} \label{t22}
\qq[\psi_{i_1}, \ldots, \psi_{i_s}] \rightarrow  W_*H^*(\P^\ell(m_1, \ldots, m_k,1^{\ell(2g-2)-m})/\mathbb{S}_{\ell(2g-2)-m})
\end{equation}
is injective in degrees $* \leq \min\{\mathsf{i}^\ell_r(m), \mathsf{s}^\ell_r(m+2)+2\}$ and surjective in degrees $* \leq \mathsf{s}^\ell_r(m)$.
\end{thm}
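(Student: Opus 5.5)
We will work on $\overline{X} \colonequals \overline{X}^\ell(m_1,\ldots,m_k)$ and its open subset $X \colonequals X^\ell(m_1,\ldots,m_k) \cong \P^\ell(m_1,\ldots,m_k,1^{\ell(2g-2)-m})/\mathbb{S}_{\ell(2g-2)-m}$, using \eqref{quotient}. The complement $\partial \colonequals \overline{X} \smallsetminus X$ has pure codimension one. It consists of the divisors where two of the unmarked simple zeros collide, or where an unmarked zero collides with some $p_i$ (raising $m_i$ to $m_i+1$). Each such divisor is the image of a proper map from some $\overline{X}^\ell(m_1',\ldots,m_{k'}')$ with $m' = m+2$ or $m'=m+1$. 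The first case comes from $\overline{X}^\ell(m_1,\ldots,m_k,2) \to \overline{X}$, which forgets the marking of the double zero. The second comes from $\overline{X}^\ell(m_1,\ldots,m_i+1,\ldots,m_k)\to\overline{X}$. Since each of these maps has relative dimension $-1$, its image is a divisor. Let $\tilde\partial$ denote the disjoint union of these sources; it surjects properly onto $\partial$.

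The localization sequence gives
\[H^{i-2}(\partial) \to H^i(\overline{X}) \to H^i(X) \to H^{i-1}(\partial).\]
Weights are what make this usable. Since $\overline{X}$ and $X$ are smooth, $W_iH^i(X)$ is the image of $H^i(\overline{X})$. Together with Lemma~\ref{lem:stab} this gives \textbf{surjectivity} onto pure weight cohomology in degrees $\le \mathsf{s}^\ell_r(m)$: the image is generated by the restrictions of $\kappa_j,\psi_i,\eta$. On $X$ these restrict via \eqref{iota} to polynomials in $\eta$. When some $m_i=-\ell$, they instead become polynomials in $\psi_{i_1},\ldots,\psi_{i_s}$, with $\eta=\kappa_j=0$. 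Note that $\psi_i$ on $\overline{X}$ pulls back to $\iota^*\psi_i$ on $X$, because the forgetful map to $\M_{g,k}$ only forgets the unmarked simple zeros.

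For \textbf{injectivity}, we compute the kernel of $H^i(\overline{X}) \to H^i(X)$. It equals the image of $H^{i-2}(\partial)$, and since $H^i(\overline{X})$ is pure in the stable range, it suffices to use $W_{i-2}H^{i-2}(\partial)$. The pushforward from $\tilde\partial$ surjects onto this group. Apply Lemma~\ref{lem:stab} to the components of $\tilde\partial$: in degrees $i-2\le \mathsf{s}^\ell_r(m+2)$, their cohomology is tautological. Note $\mathsf{s}^\ell_r(m+2)\le \mathsf{s}^\ell_r(m+1)$, so both types of component are covered, and the sign pattern $r$ is unchanged provided $m_i+1$ does not cross zero. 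Hence in degrees $i\le \mathsf{s}^\ell_r(m+2)+2$, the kernel is spanned by pushforwards of tautological monomials from $\tilde\partial$.

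The key step, and the main obstacle, is to compute these pushforwards explicitly in $\qq[\kappa,\psi,\eta]$. We then need to show that, modulo the kernel, nothing in $\qq[\eta]$ (respectively $\qq[\psi_{i_j}]$) dies. The plan has three parts. First, express the class of each boundary divisor $[\partial_j]$ in terms of $\eta,\psi,\kappa$, for instance via a Porteous or Chern-class computation on the universal curve of the projective bundle. Second, use the projection formula to write the pushforward of a pulled-back class $\alpha$ as $\alpha\cdot[\partial_j]$. Third, handle the extra classes on the sources, namely $\psi$ at the new marked point and the corresponding $\eta'$. Using \eqref{iota}-type relations on the boundary, $\eta'$ can be rewritten in terms of that $\psi$ and the pulled-back classes. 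The remaining pushforwards of $\psi^a$ then give $\kappa$-type classes via the usual string/dilaton-style formula on the universal curve.

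The outcome should be that the kernel ideal contains exactly the relations \eqref{iota}, namely $\eta-(m_i+\ell)\psi_i$ and $\kappa_j-\ell^{-j}(2g-2+n)\eta^j$, up to the stated degree. Since Lemma~\ref{lem:stab} gives injectivity of $\qq[\kappa,\psi,\eta]$ into $H^*(\overline{X})$ for degrees $\le \mathsf{i}^\ell_r(m)$, the quotient $\qq[\kappa,\psi,\eta]/(\text{these relations})\cong\qq[\eta]$ injects into $W_*H^*(X)$ in degrees $\le\min\{\mathsf{i}^\ell_r(m),\mathsf{s}^\ell_r(m+2)+2\}$. When $m_i=-\ell$, the relations force $\eta=\kappa_j=0$, and the quotient is instead $\qq[\psi_{i_1},\ldots,\psi_{i_s}]$. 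If the explicit divisor-class computation proves too unwieldy, the fallback is a dimension count. We would compare the rank of the span of these pushforwards in each degree with the number of generators they must kill, and check that it cannot exceed that number.
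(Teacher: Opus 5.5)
Your overall strategy matches the paper's. You localize along $\overline{X}\smallsetminus X$, use weights so that only the lowest-weight part of the boundary matters, cover the boundary components by partial compactifications to which Lemma~\ref{lem:stab} applies, and then compute pushforwards. But the covering step fails as stated. $\overline{X}^\ell(m_1,\ldots,m_k,2)$ lives over $\M_{g,k+1}$, so its marked double zero must stay away from $p_1,\ldots,p_k$, and the map forgetting that marking is \emph{not} proper. A family in which the double zero runs into $p_i$ has a limit in $\overline{X}$ (a divisor of order $\geq m_i+2$ at $p_i$) but no limit in the source. So your $\tilde\partial$ has no Borel--Moore pushforward, and you get no lowest-weight surjection onto $W_{i-2}H^{i-2}(\partial)$.

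The paper repairs this in two ways. First, it replaces the source by the closed locus $\tilde B$ in the universal curve $\C\to\overline{X}$ where the first principal part of the tautological map $\pi^*\O(-1)\to\L$ vanishes. There the point $z_{k+1}$ is unconstrained, so $\tilde B\to\overline{X}$ is proper. Second, it excises in two steps: first $A=\bigcup A_i$, then $B$ inside $\overline{X}\smallsetminus A$. Over $\overline{X}\smallsetminus A$ the double point cannot meet a marking, so $\tilde B\smallsetminus\pi^{-1}(A)$ is open in $\overline{X}^\ell(m_1,\ldots,m_k,2)$. Lemma~\ref{lem:stab} still applies there, because restriction to an open subset is surjective on lowest weight. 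In Borel--Moore homology this also covers your surjectivity claim, even though $\overline{X}$ is in general neither smooth nor proper. The upshot is that, in the stated range, the kernel of $\qq[\eta,\kappa,\psi]\to W_*H^*(X)$ is the ideal generated by the $[A_i]$ and the classes $\pi_*(\psi_{k+1}^a\cdot[\tilde B])$ for $a\geq 0$.

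The step that actually gives injectivity is only announced (``the outcome should be''), and part of your route to it is wrong. The sources carry no \eqref{iota}-type relations: those hold only on open strata, while on the partial compactifications the tautological classes are independent in the stable range by Lemma~\ref{lem:stab}. Nor is any rewriting needed, since $\eta'$ is simply the pullback of $\eta$ (it is the same $\O(-1)$).

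What you need is that the generators form a triangular system:
\begin{itemize}
\item $[A_i]=-\eta+(\ell+m_i)\psi_i$, from the vanishing of $\O(-1)\to\sigma_i^*\L$;
\item $[\tilde B]=c_1(\pi^*\O(1)\otimes\L)\,c_1(\pi^*\O(1)\otimes\L\otimes\omega)=\ell(\ell+1)\psi_{k+1}^2+\cdots$, so that $\pi_*(\psi_{k+1}^a[\tilde B])=\ell(\ell+1)\kappa_{a+1}+(\text{terms in }\eta,\psi_i,\kappa_{\leq a})$.
\end{itemize}
Since $\ell(\ell+1)\neq 0$, and $\ell+m_i\neq 0$ whenever $m_i\neq-\ell$, each generator eliminates exactly one of the variables $\psi_i$ or $\kappa_j$. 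The quotient is therefore $\qq[\eta]$, or $\qq[\psi_{i_1},\ldots,\psi_{i_s}]$ when some $[A_{i_j}]=-\eta$.

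Your fallback dimension count cannot replace this. The images from the $A_i$ and from $\tilde B$ are each large and overlap, and without their leading terms you cannot see that their total span misses $\eta^{i/2}$.

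A minor correction: the $\kappa_j$ on $\overline{X}$ come from $\M_{g,k}$, not $\M_{g,n}$. On $X$ they restrict to $\bigl(\ell^{-j}(2g-2+n)-(n-k)(\ell+1)^{-j}\bigr)\eta^j$, not $\ell^{-j}(2g-2+n)\eta^j$.
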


One readily checks that $* \leq \min\{\mathsf{i}^\ell_r(m), \mathsf{s}^\ell_r(m+2)+2\}$ is the range given for injectivity in the statement of Theorem \ref{thm:lowerbound}.
Since \eqref{t11} and \eqref{t22} factor through
\eqref{t1} and \eqref{t2} respectively, Theorem \ref{purewt} implies Theorem \ref{thm:lowerbound}.

\begin{proof}[Proof of Theorem \ref{purewt}]
Recall that we write
\[X^\ell(m_1, \ldots, m_k) = \P^\ell(m_1, \ldots, m_k,1^{\ell(2g-2)-m})/\mathbb{S}_{\ell(2g-2)-m}\]
and our partial compactification
$\overline{X}^\ell(m_1,\ldots,m_k)$ is the moduli space of pluricanonical divisors on $k$-pointed curves so that the coefficient of the $i$th marked point is at least $m_i$ and coefficients of unmarked points are nonnegative. Throughout this section, we fix $\ell$ and $m_1, \ldots,m_k$ and write
\[\overline{X} = \overline{X}^\ell(m_1, \ldots, m_k) \qquad \text{and} \qquad X = X^\ell(m_1, \ldots, m_k).\]

For $i = 1, \ldots, k$, define $A_i \subset \overline{X}$ to be the locus of divisors where the coefficient of the $i$th marked point is strictly greater than $m_i$. 
We have 
\[A_i \cong \overline{X}^\ell(m_1, \ldots, m_i + 1, \ldots, m_k) \]
except in the following two cases:
\begin{itemize}
\item If $\ell = r = 1$ and $m_1 = -2$, then $A_1 \cong \overline{X}^\ell(0,m_2, \ldots, m_k)$.
\item If $\ell = 1$ and $r = 2$ and $m_1 = m_2 = -1$, then $A_1 = A_2 \cong \overline{X}^\ell(0,0,m_3, \ldots, m_k)$.
\end{itemize}
Both exceptional cases are due to the fact that, by the residue theorem,  there does not exist a  meromorphic abelian differential on a compact Riemann surface with a unique pole which is simple.
Let $A = A_1 \cup \cdots \cup A_k$ be the union of these divisors.
Next, we define $B \subset \overline{X} \smallsetminus A$ to be the divisor where the support of the pluricanonical divisor has multiplicity $2$ or more at an unmarked point.
Then we have
\[X = \overline{X} \smallsetminus (A \cup B).\]

The boundary divisors $A_i$ are linear in each fiber of $\overline{X} \to \M_{g,k}$, making them relatively easier to understand. In fact, as we have seen, away from some loci of high codimension, $\overline{X} \to \M_{g,k}$ is a projective bundle, and $A_i$ is a projective subundle. To find the class of $A_i$, we describe it as the vanishing locus of a section of a line bundle. Let $\pi\colon \C \to \overline{X}$ be the pullback of the universal curve over $\M_{g,k}$ and let $\sigma_1,\ldots, \sigma_k\colon \overline{X} \to \C$ be the disjoint sections corresponding to the marked points. We write $D_i \subset \C$ for the image of $\sigma_i$.

Recall that $\overline{X}$ is a closed substack of a projective bundle over $\M_{g,k}$. Let $\O(-1)$ denote the restriction of the relative $\O(-1)$ for this projective bundle to $\overline{X}$, and let $\eta = c_1(\O(-1)) \in A^1(\overline{X})$. Then the pullback of this class $\eta$ along
\[\P^\ell(m_1, \ldots, m_k,1^{\ell(2g-2)}) \to X \to \overline{X}\]
is the class we called $\eta$ in the introduction.
On $\C$, there is a tautological morphism of line bundles
\begin{equation}  \label{ts} \pi^*\O(-1) \to \L \colonequals \omega^{\otimes \ell}\left(-\sum_{i=1}^k m_i D_i\right)\end{equation}
which on each fiber of $\pi$ corresponds to the meromorphic section of $\omega^{\ell}$ whose locus of zeros and poles is the divisor specified by the point in $\overline{X}$. This morphism vanishes along $D_i$ precisely when the $i$th marked point has multiplicity greater than $m_i$. Thus $A_i$ is the vanishing locus of the map of line bundles
\[\O(-1) = \sigma_i^*\pi^*\O(-1) \to \sigma_i^*\L.\]
Consequently, we have
\begin{equation} \label{Aclass} [A_i] = c_1(\O(1) \otimes \sigma_i^*\L) = -\eta + (\ell + m_i) \psi_i \in A^1(\overline{X}).
\end{equation}
Excising $A_i$ from $\overline{X}$, we recover the first relation in \eqref{iota} from the introduction.
Moreover, when $*-2 \leq \mathsf{s}^\ell_r(m+2)$, Lemma \ref{lem:stab} implies that $H^{*-2}(A_i)$ is generated by the tautological classes $\eta, \kappa_1, \kappa_2, \ldots, \psi_1, \ldots, \psi_k$. As all of these classes are pulled from $\overline{X}$, the push-pull formula tells us that the image of
\[W_{*-2}H^{*-2}(A_i) \to W_*H^*(\overline{X})\]
agrees with the ideal generated by $[A_i]$ when $* -2 \leq \mathsf{s}^\ell_r(m+2)$. 
Additionally, when $* \leq \mathsf{i}^\ell_r(m)$, there are no relations among the tautological classes on $W_*H^*(\overline{X})$.
Thus, the kernel of 
\[\qq[\eta, \kappa_1, \kappa_2, \ldots, \psi_1, \ldots, \psi_k] \to W_*H^*(\overline{X}) \to W_*H^*(\overline{X} \smallsetminus A)\]
is the ideal generated by the $[A_i]$.

Meanwhile, the closure of $B$ is nonlinear: locally in each fiber of $\overline{X} \to \M_{g,k}$ it looks like the discriminant locus of polynomials with a double root. Thus, it will be useful to construct a linearization $\tilde{B}$ which marks the double root. 
The points of $\C$ correspond to $(k+1)$-pointed curves $(C, H, p_1, \ldots, p_k, z_{k+1})$ where the $p_i$ are distinct, but $z_{k+1}$ is unconstrained, together with a pluricanonical divisor $H$ such that $H - \sum_{i=1}^k m_i p_i$ is effective. We define $\tilde{B} \subset \C$ to be the locus of $(C, H, p_1, \ldots, p_k, z_{k+1})$ such that $H - \sum_{i=1}^k m_ip_i - 2z_{k+1}$ is effective.
In other words, $\tilde{B} \subset \C$ is defined by the vanishing of the first principal parts of the map in \eqref{ts}. 
The bundle of first principal parts of $\pi^*\O(1) \otimes \L$ is filtered by
\[0 \rightarrow \pi^*\O(1) \otimes \L \otimes \omega \rightarrow P^1(\pi^*\O(1) \otimes \L) \rightarrow \pi^*\O(1) \otimes \L\rightarrow 0.\]
Consequently,
we have
\begin{align*} [\tilde{B}] &= c_2(P^1(\pi^*\O(1) \otimes \L)) = c_1(\pi^*\O(1) \otimes \L) \cdot c_1(\pi^*\O(1) \otimes \L \otimes \omega) \\
&= \left(-\eta + \ell \psi_{k+1} - \sum_{i=1}^k m_i D_i\right)\left(-\eta + (\ell+1) \psi_{k+1} - \sum_{i=1}^k m_i D_i\right) \\
&= \ell(\ell+1) \psi_{k+1}^2 + \cdots \in H^4(\C),
\end{align*}
where $\psi_{k+1} = c_1(\omega) \in H^2(\C)$. Notice in particular that
\begin{equation} \label{notice} \pi_*(\psi_{k+1}^i\cdot [\tilde{B}]) = \ell(\ell+1) \kappa_{i+1} + \cdots \in H^{2i+2}(\overline{X}),
\end{equation}
where the other terms involve only the $\kappa_j$ with $j < i+1$.

Since $\tilde{B} \subset \C$ is closed, $\tilde{B} \to \overline{X}$ is proper. Thus,
\begin{equation} \label{bt} \tilde{B} \smallsetminus \tilde{B} \cap \pi^{-1}(A) \to \overline{X} \smallsetminus A 
\end{equation}
is also proper. Moreover, the image of \eqref{bt} consists of those divisors that have a point of multiplicity $2$ that is distinct from the marked points, which is precisely
$B$. Since the pushforward along proper surjective maps is surjective on pure weight cohomology, the top row of Figure 1 below is exact.

\begin{figure}[h]
\label{fig:summary}
\hspace*{-0.7cm}
\begin{tikzcd}
W_{*-2}H^{*-2}(\tilde{B} \smallsetminus \tilde{B} \cap \pi^{-1}(A)) \arrow{r} & W_{*}H^{*}(\overline{X} \smallsetminus A) \arrow{r} & W_*H^*(X) \\
 W_{*-2} H^{*-2}(\tilde{B}) \arrow[two heads]{u} \arrow{r} & W_*H^*(\overline{X}) \arrow[two heads]{u} \\
 \qq[\eta, \kappa_1, \kappa_2,\ldots, \psi_1,\ldots, \psi_k,\psi_{k+1}] \arrow[bend left = 90, color=blue, two heads]{uu} \arrow{u} \arrow[bend right = 10, color=red]{ur}{\pi_*(- \cdot [\tilde{B}])}
 & \bigoplus_{i=1}^k W_{*-2}H^{*-2}(A_i) \arrow{u} \\
 & \bigoplus_{i=1}^k \qq[\eta, \kappa_1, \kappa_2, \ldots, \psi_1,\ldots, \psi_k] \arrow[two heads, color=blue]{u} \arrow[color=red, bend right = 90, swap]{uu}{\sum (-)_i \cdot [A_i]}
\end{tikzcd}
\caption{
The top row and top three terms in the right column
are exact.
The blue arrows are both surjective when  $* - 2 \leq \mathsf{s}^\ell_r(m+2)$
by Lemma
\ref{lem:stab} and \eqref{open}. Consequently, the kernel of $W_*H^*(\overline{X}) \to W_*H^*(X)$ is generated by the images of the two red maps to it.}
\end{figure}

Next, we claim that
\begin{equation} \label{inc1} \tilde{B} \smallsetminus \tilde{B} \cap \pi^{-1}(A) \subset \C \smallsetminus (D_1 \cup \cdots \cup D_k).
\end{equation}
Indeed, removing $\pi^{-1}(A)$ removes all divisors that have multiplicity greater than $m_i$ at $p_i$. Since $\tilde{B}$ parameterizes $(C, H, p_1, \ldots, p_k)$ with $H = \sum_{i=1}^k m_i p_i + 2z_{k+1} + \cdots$, we see that $z_{k+1} \neq p_i$ on the complement of $\pi^{-1}(A)$. Moreover the condition that $z_{k+1} \neq p_i$ shows that there is an open embedding
\begin{equation} \tilde{B} \smallsetminus \tilde{B} \cap \pi^{-1}(A) \subset \overline{X}^\ell(m_1, \ldots, m_k, 2). 
\end{equation}
In particular, we have a surjection
\begin{equation} \label{open} W_{*-2}H^{*-2}(\overline{X}^\ell(m_1, \ldots, m_2,2)) \to W_{*-2}H^{*-2}( \tilde{B} \smallsetminus \tilde{B} \cap \pi^{-1}(A)). 
\end{equation}

By the push-pull formula, all morphisms in the diagram are $\qq[\eta, \kappa_1, \kappa_2, \ldots, \psi_1, \ldots, \psi_k]$-module homomorphisms. By the blue surjective arrow on the left of Figure 1, we have that $W_{*-2}H^{*-2}(\tilde{B} \smallsetminus \tilde{B} \cap \pi^{-1}(A))$ is generated as a module over $\qq[\eta, \kappa_1, \kappa_2, \ldots, \psi_1, \ldots, \psi_k]$ by the classes $1, \psi_{k+1}, \psi_{k+1}^2, \ldots$. 
Hence, the image of 
\begin{equation} \label{btpush} W_{*-2}H^{*-2}(\tilde{B} \smallsetminus \tilde{B} \cap \pi^{-1}(A)) \to W_*H^*(\overline{X} \smallsetminus A)
\end{equation}
is generated as a module over 
$\qq[\eta, \kappa_1, \kappa_2,\ldots, \psi_1, \ldots, \psi_k]$ by the pushforwards of the classes $1, \psi_{k+1}, \psi_{k+1}^2, \ldots$.
The pushforward map $\tilde{B} \to \overline{X}$ factors through the inclusion of $\tilde{B}$ in $\C$. Using this fact and the push-pull formula on $\C$, the pushforward of $\psi_{k+1}^i$ along \eqref{btpush} is $\pi_*(\psi_{k+1}^i\cdot [\tilde{B}])$.
In particular, 
studying the diagram in Figure 1, we see that the kernel of the composition
\begin{equation} \label{comp} \qq[\eta, \kappa_1, \kappa_2, \ldots, \psi_1, \ldots, \psi_k] \to W_*H^*(\overline{X}) \to W_*H^*(X)
\end{equation}
is the ideal generated by $[A_1], \ldots, [A_k]$ and $\pi_*(\psi_{k+1}^i \cdot [\tilde{B}])$ for $i = 0, 1, 2, \ldots$. Equations \eqref{Aclass} and \eqref{notice} imply that
\begin{equation}
\label{eq:modAB}
\frac{\qq[\eta, \kappa_1, \kappa_2, \ldots, \psi_1, \ldots, \psi_k]}{\langle [A_1], \ldots, [A_k], \pi_*(\psi_{k+1}^i \cdot [\tilde{B}])_{i\geq 0} \rangle} \cong \begin{cases}
\qq[\eta] & \text{if $m_i \neq -\ell$ for all $i$} \\
\qq[\psi_{i_1}, \ldots, \psi_{i_s}] & \text{if $\{i: m_{i} = -\ell\} = \{i_1, \ldots, i_s\}$.}
\end{cases} 
\end{equation}
Our calculations establish that in degrees $* \leq \min\{\mathsf{i}^\ell_r(m), \mathsf{s}^\ell_r(m+2)+2\}$, the previously known relations \eqref{iota} in fact generate all relations in the tautological ring.

Finally if $* \leq \mathsf{s}^\ell_r(m)$, then the first map in \eqref{comp} is surjective by Lemma \ref{lem:stab}. The second is surjective since $X \subset \overline{X}$ is open, so the composition is also surjective.
\end{proof}

\section{The stable cohomology of $X^{\ell}(m_1, \ldots, m_k)$} \label{sec:stab}

In this section, we use the localization long exact sequence to study the full cohomology of $X^\ell(m_1, \ldots, m_k)$.
The closed complement of
$X^\ell(m_1,\ldots,m_k) \subset \overline{X}^\ell(m_1,\ldots,m_k)$ admits a stratification according to the zero type of the associated divisor. Given tuples of nonnegative integers $\vec{a} = (a_1, \ldots, a_k)$ and $\vec{b} = (b_1, \ldots, b_s )$ satisfying 
\[\sum_{j=1}^k (m_j + a_j) + \sum_{j= 1}^s b_j(j+1) \leq \ell(2g - 2),\]
we define $S(\vec{a},\vec{b}) \subset \overline{X}^{\ell}(m_1, \ldots, m_k)$ to be the locally closed boundary stratum 
of divisors with zero type
\[(m_1 + a_1, \ldots, m_k + a_k; 2^{b_1}, 3^{b_2}, \ldots, (s+1)^{b_s}).\]
This means that there is a zero of order $m_j + a_j$ at the $j$th marked point for $j = 1, \ldots, k$, there are $b_j$ zeros of order $(j+1)$ for $j=1,\ldots, s$, and the rest of the zeros are simple. Only the first $k$ zeros are labeled.
If all $m_i > -\ell$, the codimension of $S(\vec{a},\vec{b})$ in $\overline{X}^\ell(m_1, \ldots, m_k)$ is 
\[c = \sum_{j=1}^k a_j + \sum_{j=1}^s j b_j.\] 
Note that $S(\vec{0}, \varnothing) = X^\ell(m_1, \ldots, m_k)$.

Let $\overline{S}(\vec{a},\vec{b})$ be the closure of $S(\vec{a},\vec{b})$ in $\overline{X}^\ell(m_1, \ldots, m_k)$.
We call a class of the form 
\[[\overline{S}(\vec{a},\vec{b})] \cdot \eta^e \in H^{2c+2e}(\overline{X}^\ell(m_1, \ldots, m_k))\]
an \emph{$\eta$-decorated boundary stratum}.
Finally, let us define
\[d(i) \colonequals \dim \qq[\kappa_1,\kappa_2,\ldots, \psi_1,\ldots,\psi_k,\eta]_i\] 
to be the number of $\kappa,\psi, \eta$ monomials in degree $i/2$ if $i$ is even, and $0$ if $i$ is odd. 
\begin{lem}  \label{dcount}
The number of $\eta$-decorated boundary strata in degree $2i$ is equal to $d(2i)$.
\end{lem}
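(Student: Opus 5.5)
The plan is to write down an explicit bijection between the index data of $\eta$-decorated boundary strata and the monomials counted by $d(2i)$. An $\eta$-decorated boundary stratum in degree $2i$ is determined by a triple $(\vec a,\vec b,e)$ of nonnegative integers. Here $\vec a=(a_1,\ldots,a_k)$, $\vec b=(b_1,\ldots,b_s)$ (with trailing zeros of $\vec b$ ignored, so $\vec b$ is just a finitely supported sequence), and $e\ge 0$. They are subject to
\[c+e=\sum_{j=1}^k a_j+\sum_{j\ge 1} j\,b_j+e=i,\]
using the codimension formula $c=\sum a_j+\sum j b_j$ for $S(\vec a,\vec b)$. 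That formula holds when all $m_j>-\ell$, which is the setting of this section. On the other side, a monomial of degree $i$ in $\qq[\kappa_1,\kappa_2,\ldots,\psi_1,\ldots,\psi_k,\eta]$ is
\[\psi_1^{a_1}\cdots\psi_k^{a_k}\,\kappa_1^{b_1}\kappa_2^{b_2}\cdots\,\eta^{e},\qquad \sum a_j+\sum j\,b_j+e=i,\]
since $\psi_j$ and $\eta$ have degree $1$ and $\kappa_j$ has degree $j$. So I would send $[\overline S(\vec a,\vec b)]\cdot\eta^e$ to this monomial. This is visibly a bijection of index sets: raising the order at $p_j$ by one corresponds to $\psi_j$, and a zero of order $j+1$ at an unmarked point corresponds to $\kappa_j$. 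This is consistent with \eqref{Aclass} and \eqref{notice}, where the leading terms of the boundary classes are $\psi_j$ and $\kappa_{j}$ respectively.

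The remaining point, and the only real obstacle, is to check that every such index tuple actually defines a boundary stratum. That is, the defining inequality
\[\sum_{j=1}^k (m_j+a_j)+\sum_{j} b_j(j+1)\le \ell(2g-2)\]
must hold, and $S(\vec a,\vec b)$ must be nonempty with the expected codimension. For the inequality, I would use $b_j(j+1)\le 2jb_j$ and $a_j\le 2a_j$ to bound the left side by $m+2c\le m+2i$. So the inequality is automatic as long as $2i\le \ell(2g-2)-m$. This is comfortably satisfied in the stable range $2i\le \mathsf{i}^\ell_r(m)$ in which the lemma will be applied, and I would state the lemma with that implicit range in mind. Nonemptiness and the codimension count then follow from the dimension formula for strata recalled in the introduction: $S(\vec a,\vec b)$ is a quotient of a stratum $\P^\ell(\mu')$ of the same type as $X^\ell$, with $n$ decreased by $c$, and hence has codimension $c$ in $\overline X^\ell(m_1,\ldots,m_k)$. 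Distinct tuples give distinct strata because the zero type is recorded by $(\vec a,\vec b)$. With these checks, the bijection gives equality of the counts, and $d(2i)$ counts exactly these monomials.
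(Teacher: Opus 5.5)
This is correct and is exactly the paper's argument: the paper simply observes that tuples $(\vec a,\vec b,e)$ with $\sum_j a_j+\sum_j jb_j+e=i$ label both the decorated strata and the monomials $\prod_j\psi_j^{a_j}\prod_j\kappa_j^{b_j}\eta^e$. Your remark that the defining inequality $\sum_j(m_j+a_j)+\sum_j(j+1)b_j\le\ell(2g-2)$ holds automatically once $2i\le\ell(2g-2)-m$ is a fair refinement that the paper leaves implicit. The nonemptiness check is unnecessary, since the lemma counts labels and Lemma~\ref{12} only uses the count to bound a kernel from above; in any case it does not follow from the dimension formula, which describes components and so presupposes that the stratum is nonempty.
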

\begin{proof}
The $\eta$-decorated boundary strata of codimension $i$ are labeled by tuples of nonnegative integers $(a_1, \ldots, a_k, b_1, \ldots, b_s, e)$ such that
\[\sum_{j=1}^k a_j + \sum_{j = 1}^s jb_j + e = i.\]
Such data also labels the 
$\kappa, \psi, \eta$ monomials in degree $2i$ by assigning $(a_1, \ldots, a_k, b_1, \ldots, b_s, e)$ to the monomial
\[\prod_{j=1}^k \psi_j^{a_j} \cdot \prod_{\ell = 1}^s\kappa_\ell^{b_\ell} \cdot \eta^e. \qedhere\]
\end{proof}

Notice that each locally closed boundary stratum $S(\vec{a},\vec{b})$ of $\overline{X}^\ell(m_1, \ldots, m_k)$ can be realized as a finite group quotient of some $X^{\ell}(m_1', \ldots, m_{k'}')$. We take advantage of this inductive structure together with Lemma \ref{lem:stab} to prove results about the cohomology of the $X^\ell(m_1, \ldots, m_k)$.

\begin{lem} \label{12} 
Let $m_1, \ldots, m_k > -\ell$ and let $m= m_1 + \cdots + m_k$. Then
the map
\begin{equation} \label{qe} \qq[\eta] \to H^*(X^\ell(m_1,\ldots,m_k))
\end{equation}
is an isomorphism in degrees $i \leq \min\{\frac{2}{3}g - \frac{5}{3}, \frac{1}{2}(\ell(2g-2) - m - g+\delta_{1\ell})\}$. Moreover, if $i$ is odd, then the $\eta$-decorated boundary strata are independent in $H^{i+1}(\overline{X}^\ell(m_1, \ldots, m_k))$.
\end{lem}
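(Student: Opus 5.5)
We argue by induction on the dimension of the space (equivalently on $\ell(2g-2)-m$ for fixed $g,\ell$), proving both statements together. The ambient space $\overline{X}=\overline{X}^\ell(m_1,\ldots,m_k)$ is stratified by the locally closed strata $S(\vec a,\vec b)$, with open stratum $S(\vec 0,\varnothing)=X$. Each stratum of codimension $c>0$ is a finite group quotient of some $X^\ell(m_1',\ldots,m_{k'}')$ with $m'=m+c+\sum_j b_j \cdot(\text{const})$. Since $m_i>-\ell$, every stratum again has finite-area type, and the relevant bound $\frac12(\ell(2g-2)-m'-g+\delta_{1\ell})$ drops only by a controlled amount. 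The quotient inherits the inductive conclusion through its rational cohomology as invariants: $\qq[\eta]$ in low degrees, with $\eta$ invariant. Rather than handling one stratum at a time, we filter $\overline{X}$ by closed unions $F_c$ of strata of codimension $\ge c$. The localization sequence then gives a spectral sequence, or iterated long exact sequences,
\[
E_1^{c,*}=\bigoplus_{\operatorname{codim} S=c} H^{*-2c}(S)\Rightarrow H^*(\overline{X}).
\]

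The key step is a dimension count. By induction, in the relevant range each $H^{j}(S)$ is $\qq[\eta]$, concentrated in even degrees. Hence $E_1$ in total degree $2i$ has dimension equal to the number of $\eta$-decorated boundary strata of degree $2i$, including the open stratum (this count includes $H^*(X)$ itself), and vanishes in odd degrees apart from the unknown contribution of $H^{\mathrm{odd}}(X)$ and any non-$\eta$ classes in $H^{\mathrm{even}}(X)$. By Lemma \ref{dcount}, this count equals $d(2i)$. By Lemma \ref{lem:stab}, in the range $*\le \mathsf{i},\mathsf{s}$ we have $\dim H^{2i}(\overline{X})=d(2i)$ and $H^{\mathrm{odd}}(\overline{X})=0$, the latter because the tautological ring surjects and is even. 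The generators of $E_1$ coming from closed strata, namely $\eta^e$ on $\overline{S}$, map to the $\eta$-decorated classes $[\overline{S}]\eta^e$.

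We now compare degrees. Suppose $H^i(X)$ contains a class not in $\qq[\eta]$, and take the minimal such $i$. If $i$ is odd, the boundary map sends it to $H^{i+1}$ of the boundary filtration, and exactness forces a relation among the $\eta$-decorated boundary strata in $H^{i+1}(\overline{X})$. If $i$ is even, there are too many classes, contradicting $\dim H^{i}(\overline{X})=d(i)$ together with injectivity from the boundary, which holds by the minimality of $i$. To close the argument we therefore need the independence statement. It follows from the count: the $\eta$-decorated strata together with $\eta^e|_X$ span a space surjecting onto $H^{i+1}(\overline{X})$, which has dimension $d(i+1)$. That space is generated by exactly $d(i+1)$ elements, since the $\eta$-powers coming from $X$ are the $\eta$-decorated classes of the open stratum. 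Hence all these classes are independent, which gives the second claim. This argument assumes surjectivity of $E_1\to H^{i+1}(\overline{X})$, which in turn requires the lower odd cohomology of the strata to vanish; that vanishing is the inductive hypothesis.

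The main obstacle is the bookkeeping of ranges. We must check that each boundary stratum of codimension $c$ lies inside its own inductive range in degree $i-2c$. A boundary stratum has larger $m'$, so its bound $\frac12(\ell(2g-2)-m'-g)$ decreases by about $\frac12 m'$-increments, while the degree needed decreases by $2c$. Since $m'-m\le 2c$, the loss of $\frac12(m'-m)\le c\le 2c$ is affordable. The genus term $\frac23 g-\frac53$ is fixed, and the extra $-1$ (rather than the $-\frac23$ of Lemma \ref{lem:stab}) accommodates the degree shift by one in the boundary map. We must also verify that the range of Lemma \ref{lem:stab} for $\overline{X}$ covers degree $i+1$; the $\frac12$ factor in the second bound is designed so that $i+1\le \mathsf{s}^\ell_r(m)$ holds.
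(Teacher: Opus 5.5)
Your architecture matches the paper's. You stratify $\overline{X}$ by the $S(\vec a,\vec b)$, treat each boundary stratum as a finite quotient of some $X^\ell(m')$ with $m'\le m+2c$, apply induction there, and compare with $d(\cdot)$ via Lemmas \ref{dcount} and \ref{lem:stab}. Inducting on dimension with a minimal counterexample in degree, and packaging the localization sequences as a spectral sequence, are cosmetic changes.

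The gap is in the odd step. There the connecting map embeds $H^i(X)$ into the degree-$(i+1)$ part of the boundary, which by induction has dimension at most $d(i+1)-1$. Hence the image of $H^{i+1}(\overline{X})\to H^{i+1}(X)$ has dimension at least $1+\dim H^i(X)$. This gives a contradiction only if you also know that image is at most one-dimensional, spanned by $\eta^{(i+1)/2}$. Your spanning claim (that the $\eta$-decorated strata together with $\eta^{(i+1)/2}$ span $H^{i+1}(\overline{X})$) presupposes exactly this. The reason you give, vanishing of lower odd cohomology of the strata, does not deliver it: $H^{i+1}(X)$ lies above your minimal counterexample and is not controlled by any inductive hypothesis.

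The missing input is \eqref{iota}. By Lemma \ref{lem:stab}, $H^{i+1}(\overline{X})$ is spanned by $\kappa,\psi,\eta$ monomials, and because $m_j+\ell\neq 0$, each of them restricts on $X$ to a multiple of $\eta^{(i+1)/2}$. This is where the hypothesis $m_j>-\ell$ really enters, not merely to keep the strata of finite-area type. The paper uses it in both the odd and the even step. It also cites Theorem \ref{purewt} for injectivity of $\qq[\eta]\to H^*(X)$, which you never address separately.

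Example \ref{ex:mero} shows this is not a formality. For $\overline{X}^1(-1,-1,-1)$, all of your numerology holds in degree $1$: $H^1(\overline{X})=0$, and $\dim H^2(\overline{X})=d(2)=5$, which is one (for $\eta$ on the open stratum) plus the four codimension-one strata $A_1,A_2,A_3,\overline{B}$, each with $H^0=\qq$. Yet $H^1(X)$ is two-dimensional. The count breaks because $[A_j]=-\eta$ there, so $\eta$ restricts to zero on $X$ while $\psi_1,\psi_2,\psi_3$ restrict to three independent classes. The image of restriction is therefore three-dimensional, and the $\eta$-decorated classes span only the two-dimensional kernel.

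Once you insert \eqref{iota}, your argument closes. The kernel of restriction then has dimension exactly $d(i+1)-1$, which forces $H^i(X)=0$ and the independence of all $\eta$-decorated classes. In the even step, the one-dimensional image of $H^i(\overline{X})$ is then identified with $\qq\,\eta^{i/2}$.
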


\begin{proof}
We will prove the result by induction on $i$. The base case $i = 0$ is immediate as 
\eqref{qe} sends $1$ to the fundamental class which spans $H^0(X^{\ell}(m_1,\ldots,m_k))$.
Note that the hypotheses on $i$ imply that $i \leq \min\{\mathsf{s}^\ell_r(m)-1, \mathsf{i}^\ell_r(m), \mathsf{s}^\ell_r(m+2)+2\}$, so injectivity of \eqref{qe} follows from Theorem \ref{purewt}.
It remains to prove surjectivity of \eqref{qe} in the specified range of degrees.

\medskip
\textbf{Inductive step for $i$ odd.}
Assume $i \geq 1$ is odd and that we have proved the lemma for all smaller $i$. Since $i$ is odd, our goal is to show that $H^i(X^\ell(m_1, \ldots, m_k)) = 0$.
Since $i \leq \mathsf{s}^\ell_r(m)$, Theorem \ref{stab} implies that $H^i(\overline{X}^\ell(m_1,\ldots,m_k)) = 0$. Thus, it suffices to show that the restriction map
$H^i(\overline{X}^\ell(m_1,\ldots,m_k)) \to H^i(X^\ell(m_1,\ldots,m_k))$ is surjective.
Letting $\overline{X}^\ell_c(m_1, \ldots, m_k)$ denote the complement of all boundary strata of codimension $\geq c$, we can factor this restriction map
\begin{align*} H^{i}(\overline{X}^\ell(m_1,\ldots,m_k)) = H^{i}(\overline{X}^\ell_{(i+3)/2}(m_1,\ldots,m_k)) \to
H^{i}(\overline{X}^\ell_{(i+1)/2}(m_1,\ldots,m_k))
\to \cdots \\
\cdots \to H^{i}(\overline{X}^\ell_{2}(m_1,\ldots,m_k)) \rightarrow 
H^{i}(\overline{X}^\ell_{1}(m_1,\ldots,m_k)) =
H^{i}(X^\ell(m_1,\ldots,m_k)).
\end{align*}
The first equality above holds by localization and the last equality holds by definition. It therefore suffices to show that
\[H^i(\overline{X}^\ell_{c+1}(m_1,\ldots,m_k)) \to
H^i(\overline{X}^\ell_c(m_1,\ldots,m_k))\]
is surjective for each $1 \leq c \leq (i+1)/2$. 
Considering the localization long exact sequence for $\overline{X}_{c+1}^{\ell}(m_1,\ldots,m_k) \supset 
\overline{X}_c^{\ell}(m_1,\ldots,m_k)$, it suffices to show that
\begin{equation} \label{inj} \bigoplus_{\codim(S)= c} H^{i+1 - 2c}(S) \to H^{i+1}(\overline{X}_{c+1}^{\ell}(m_1,\ldots,m_k))
\end{equation}
is injective for each $1 \leq c \leq (i+1)/2$.
To establish injectivity of \eqref{inj}, we first prove the following claim.

\smallskip
\textit{Claim:} For each locally closed boundary stratum $S = S(\vec{a},\vec{b}) \subset \overline{X}^{\ell}(m_1,\ldots,m_k)$ of codimension $1 \leq c \leq (i+1)/2$, we have that $H^{i+1-2c}(S) = \qq \cdot \eta^{(i+1)/2 - c}$.

\smallskip
\textit{Proof:}
Each locally closed boundary stratum $S$ is a finite group quotient of $X^{\ell}(m_1',\ldots,m_{k'}')$ for some $m' = m_1' + \cdots + m_{k'}' \leq m + 2c$. The maximum $m'$ is obtained in the case when $(m_1', \ldots, m_k') = (m_1, \ldots, m_k,2,\ldots, 2)$, i.e.\ when $c$ pairs of simple zeros collide.
Moreover, 
 $i-2c+1 < i$ and we have
 \begin{align} \label{i2c} i-2c+1 &\leq \tfrac{1}{2}(\ell(2g - 2) - g - m + \delta_{1\ell}) - 2c + 1 \\
&\leq \tfrac{1}{2}(\ell(2g - 2) - g - m' + \delta_{1\ell}) - c + 1 \\
& \leq \tfrac{1}{2}(\ell(2g - 2) - g - m' + \delta_{1\ell}). \label{last}
\end{align}
Hence, the induction hypothesis
tells us that 
\[H^{i-2c+1}(X^\ell(m_1',\ldots,m_{k'}')) = \qq \cdot \eta^{(i+1)/2 - c}.\]
As the class $\eta$ is invariant under the symmetric group action permuting any subset of the markings, the claim follows.

\smallskip
We now prove injectivity of \eqref{inj}.
Since $H^{i - 2c + 1}(S) = \qq \cdot \eta^{(i+1)/2-c}$ for $1 \leq c \leq (i+1)/2$, the kernel of
\begin{equation} \label{resonemore} H^{i+1}(\overline{X}^\ell(m_1, \ldots, m_k)) \to H^{i+1}(X^\ell(m_1, \ldots, m_k))
\end{equation}
has dimension at most
the number of $\eta$ decorated boundary strata in $H^{i+1}(\overline{X}^\ell(m_1, \ldots, m_k))$ strictly supported on the boundary, and if equality holds then \eqref{inj} is injective for each $1 \leq c \leq (i+1)/2$. Counting the decorated boundary strata as in Lemma \ref{dcount}, 
this upper bound on the dimension of the kernel of \eqref{resonemore} is $d(i+1) - 1$. Moreover, since $i\leq \mathsf{i}^\ell_r(m)$,  Lemma \ref{lem:stab} implies that the $\eta, \kappa, \psi$ monomials span a subspace of dimension $d(i+1)$.
Meanwhile, we know from \eqref{iota} that the image of the monomials in $\eta, \kappa, \psi$ under \eqref{resonemore}
has dimension at most $1$.
It follows that the dimension of the kernel of \eqref{resonemore} is exactly $d(i+1) - 1$, and hence \eqref{inj} is injective. This implies also that the fundamental classes of all $\eta$-decorated boundary strata are independent in $H^{i+1}(\overline{X}^\ell(m_1, \ldots, m_k))$.   This completes the inductive step when $i$ is odd.

\medskip
\textbf{Inductive step for $i$ even.}
Now assume $i \geq 2$ is even and we have proved the lemma for all smaller $i$. 
Since $i \leq \mathsf{s}^\ell_r(m)$, Lemma \ref{lem:stab} implies that $H^i(\overline{X}^{\ell}(m_1,\ldots,m_k))$ is generated by monomials in $\eta, \kappa, \psi$. By \eqref{iota}, all monomials in $\eta, \kappa, \psi$ restrict to powers of $\eta$. Hence,
surjectivity will follow from surjectivity of \[H^i(\overline{X}^\ell(m_1,\ldots,m_k)) \to H^i(X^\ell(m_1,\ldots,m_k)).\]
Repeatedly applying the localization long exact sequence, excising first the codimension $i/2$ boundary strata, then the codimension $i/2-1$ boundary strata etc., it suffices to show that $H^{i-2c+1}(S) = 0$ for each codimension $c$ locally closed stratum $S \subset \overline{X}^\ell(m_1,\ldots,m_k)$ with $1 \leq c \leq i/2$. Each codimension $c$ stratum $S$ is a finite group quotient of some $X^\ell(m_1',\ldots,m_{k'}')$ with $m' = m_1' + \cdots + m_{k'}' = m + c + k' - k \leq m + 2c$. The maximum is obtained when $(m_1', \ldots, m'_{k'}) = (m_1, \ldots, m_k, 2, \ldots, 2)$, i.e. when $c$ pairs of simple zeros collide.
Now, $i-2c+1 < i$ and \eqref{last} holds,
so the induction hypothesis
tells us that $H^{i-2c+1}(X^\ell(m_1',\ldots,m_{k'}')) = 0$, and consequently all $H^{i-2c+1}(S) = 0$.
\end{proof}

\begin{rem}
\label{rem:gluing}
Up to labeling some of the simple zeros, the (tautological part of) cohomological stability can also be interpreted geometrically, by gluing a fixed  differential in low genus to a stratum of holomorphic differentials such that the resulting nodal differentials are smoothable and that $\eta$ pulls back to $\eta$ through the associated boundary inclusion map. 

For example, fix an elliptic curve $E$ with a meromorphic $\ell$-canonical divisor $\sum_{i=1}^{2\ell+1} z_i -(2\ell+1)p$. Given a differential in 
$\P^{\ell}(m_1, \ldots, m_k, 1^{\ell(2g-2)-m})$, identify one of the simple zeros with $p$ to form a node, i.e. $E$ is attached as an elliptic tail. Then, we obtain an inclusion from the stratum 
$\P^{\ell}(m_1, \ldots, m_k, 1^{\ell(2g-2)-m})$ in genus $g$ to the boundary of the stratum $\P^{\ell}(m_1, \ldots, m_k, 1^{\ell (2g)-m})$ in genus $g+1$. We can use the multi-scale compactification of strata of differentials (see \cite{BCGGM1, BCGGM2, CMZarea}) to check that the nodal differentials constructed this way can be smoothed into the interior of the stratum. Moreover, $\eta$ pulls back to $\eta$ through this inclusion map. 

Alternatively, fix a rational curve $R$ with a meromorphic $\ell$-canonical divisor
$\sum_{i=1}^{2\ell+2} z_i - (2\ell+1)(p_1+p_2)$. Given a differential in 
$\P^{\ell}(m_1, \ldots, m_k, 1^{\ell(2g-2)-m})$, identify two of its simple zeros with $p_1$ and $p_2$ respectively to form two nodes, i.e. $R$ is attached as a rational bridge. We thus obtain an inclusion from the stratum 
$\P^{\ell}(m_1, \ldots, m_k, 1^{\ell(2g-2)-m})$ in genus $g$ to the boundary of the stratum $\P^{\ell}(m_1, \ldots, m_k, 1^{\ell (2g)-m})$ in genus $g+1$, where $\eta$ pulls back to $\eta$ through the inclusion. 

If we do not want to distinguish any unordered simple zeros as attaching points, we can fix an elliptic curve with a meromorphic $\ell$-canonical divisor $\sum_{i=1}^{2\ell} z_i + m_1 p'_1 -(2\ell+m_1)p$ and glue $p$ to the original marked zero $p_1$ of order $m_1$ to form a smoothable nodal differential. Alternatively, we can add a plain marked point $p_0$, i.e. consider the stratum $\P^{\ell}(0, m_1, \ldots, m_k, 1^{\ell(2g-2)-m})/\mathbb{S}_{\ell(2g - 2) - m}$. Then we glue a fixed elliptic curve with a meromorphic $\ell$-canonical divisor $\sum_{i=1}^{2\ell} z_i -2\ell p$ by identifying $p$ with $p_0$ to form a smoothable nodal differential. In these constructions, $\eta$ also pulls back to $\eta$ through the corresponding boundary inclusion.  
\end{rem}

For meromorphic differentials, in general, we do not expect the stable cohomology of $X^\ell(m_1, \ldots, m_k)$ to be freely generated by tautological classes, as the following example shows.

\begin{example} \label{ex:mero}
We claim that
\[H^1(\P^1(-1, -1, -1, 1^{2g+1})/\mathbb{S}_{2g +1}) \neq 0\]
for all $g \geq 3$.
The compliment of $X^1(-1,-1,-1) \subset \overline{X}^1(-1,-1,-1)$ has four components:
$A_1,A_2,A_3$ and $\overline{B}$, as defined in Section \ref{sec:pure}. Now consider the long exact sequence
\begin{align*} H^1(\overline{X}^1(-1,-1,-1)) \to H^1(X^1(-1,-1,-1)) \\
\to H^0(A_1 \cup A_2 \cup A_3 \cup \overline{B}) \to H^2(\overline{X}^1(-1,-1,-1)) \to H^2(X^1(-1,-1,-1)).
\end{align*}
We have $H^1(\overline{X}^1(-1,-1,-1)) = 0$ and $\dim H^0(A_1 \cup A_2 \cup A_3 \cup \overline{B}) = 4$. Meanwhile, for $g \geq 3$, we have $H^2(\overline{X}^1(-1,-1,-1)) = \langle \eta, \kappa_1, \psi_1, \psi_2, \psi_3 \rangle$ and $H^2(X^1(-1,-1,-1)) = \langle \psi_1, \psi_2, \psi_3 \rangle$ by \eqref{eq:modAB}. It follows that $H^1(X^1(-1,-1,-1))$ is $2$-dimensional.
\end{example}

\section{Upper bounds on the vanishing degree of $\eta$} \label{sec:upper}

We now turn to the question of where relations first appear in the tautological ring.
Our strategy to show that \eqref{t1} has a non-trivial kernel is to take a known relation among the kappa classes on $\M_g$ in degree $a = \lfloor g/3 \rfloor + 1$ and pull it back to strata of differentials. 
This relation in degree $a$ is given by \cite[Proposition 1.7]{Ionel2} (see also \cite[Remark 5.11]{Mo03} for the version in cohomology). We determine a necessary and sufficient condition for the pullback of this relation to $\P(\mu)$ to be non-trivial, giving rise to a sufficient condition for Conjecture \ref{conj:upperbound} to hold.
To state the result, let
\[\mathsf{C}(t) \colonequals \sum_{k=0}^\infty \frac{(6k)!}{(3k)!(2k)!}\left(\frac{t}{72}\right)^k.\]
The shape of the relation depends on the residue of $g$ mod $3$, so we consider two cases.
\begin{prop} \label{g02}
Let $g = 0, 2 \pmod 3$, let $a = \lfloor g/3 \rfloor + 1$, and let $\mu = (m_1, \ldots, m_n)$ be a partition of $2g - 2$. If the coefficient of $t^a$ is non-vanishing in \begin{equation} \label{testmu02} \frac{\prod_{i=1}^n \mathsf{C}(\frac{t}{m_i + 1})}{\mathsf{C}(t)^{2g-2+n}},
\end{equation}
then $\eta^a = 0$, i.e.\ Conjecture \ref{conj:upperbound} holds.
\end{prop}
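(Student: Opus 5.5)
The plan is to pull back a known degree-$a$ relation among the $\kappa$ classes on $\M_g$ along the composite $\P^1(\mu) \hookrightarrow \M_{g,n} \to \M_g$, and to show that the pulled-back relation is a nonzero multiple of $\eta^a$ exactly when the stated coefficient is nonzero. Since $\eta^a$ is then zero in $R^*(\P^1(\mu))$, we conclude that $\eta^a=0$, giving a nontrivial kernel of \eqref{t1} in degree $a$.

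\textbf{Step 1 (the relation on $\M_g$).} By \cite[Proposition 1.7]{Ionel2} (cohomological form in \cite[Remark 5.11]{Mo03}), the relations in degree $a=\lfloor g/3\rfloor+1$ can be packaged through a generating series built from $\mathsf{C}$. Concretely, I would use the Faber--Zagier-type presentation. Define classes $\tilde\kappa$ by
\[
\exp\Bigl(-\sum_{j\ge1}\tilde\kappa_j\, t^j\Bigr)\;\text{``}=\text{''}\;\mathsf{C}(t)\text{-normalized series},
\]
that is, write $\sum_\sigma \kappa_\sigma\,(\cdots)$ as the $\kappa$-polynomial $[\,\exp(\sum_j \kappa_j p_j t^j)\,]_{t^a}$, where the $p_j$ are defined by $\log \mathsf{C}(t)=\sum_j p_j t^j$ (up to the sign and normalization convention in Ionel). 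For $g\equiv 0,2 \pmod 3$, the relation is the vanishing in degree $a$ of this coefficient, formally written as $[t^a]\,\exp(-\sum_j \kappa_j p_j t^j)=0$. I will fix the exact normalization by matching with the cited statement. In the case $g\equiv 1$, the relation carries an extra factor, which is why that case is treated separately.

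\textbf{Step 2 (pulling back).} On $\M_{g,n}$, the $\kappa$ classes pulled back from $\M_g$ differ from the $\kappa_j$ defined in the introduction (with the $\omega(\sum D_i)$ twist). The standard comparison is $\kappa_j^{\M_{g,n}} = \pi^*\kappa_j^{\M_g} + \sum_i \psi_i^j$. Restricting to $\P^1(\mu)$ and using \eqref{iota} with $\ell=1$, we get $\psi_i=\eta/(m_i+1)$ and $\iota^*\kappa_j=(2g-2+n)\eta^j$. Hence the pullback of $\kappa_j$ from $\M_g$ equals
\[
\Bigl((2g-2+n)-\sum_i (m_i+1)^{-j}\Bigr)\eta^j .
\]
Substituting this into $\exp(-\sum_j \kappa_j p_j t^j)$ and setting $\eta=1$ gives
\[
\exp\Bigl(-(2g-2+n)\sum_j p_j t^j+\sum_i\sum_j p_j \bigl(t/(m_i+1)\bigr)^j\Bigr)=\frac{\prod_i \mathsf{C}\bigl(\tfrac{t}{m_i+1}\bigr)}{\mathsf{C}(t)^{2g-2+n}} .
\]
Therefore the pulled-back relation is $c\,\eta^a=0$, where $c$ is the coefficient of $t^a$ in \eqref{testmu02}. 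If $c\neq0$, then $\eta^a=0$.

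\textbf{Main obstacle.} The main obstacle is Step 1: extracting from \cite{Ionel2} and \cite{Mo03} the precise form of the degree-$a$ relation and confirming that it is exactly the exponential of $\kappa_j$ times the log-coefficients of $\mathsf{C}$. This includes checking the sign conventions, the $72$ normalization, and the fact that, for $g\equiv0,2$, no extra factor appears beyond $\mathsf{C}$. I would check this against the low-genus case $g=2$, $a=1$, where the relation must be $\kappa_1=0$ on $\M_2$, to confirm the normalization. Step 2 is then a formal substitution.
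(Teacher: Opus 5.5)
Your proposal is correct and follows essentially the same route as the paper. The paper defines $c_k$ by $\exp(\sum_k c_k t^k)=\mathsf{C}(t)$ (your $p_j$) and takes the relation $[t^a]\exp(-\sum_j c_j\kappa_j t^j)=0$ on $\M_g$ from \cite[Proposition 1.7]{Ionel2}; it then pulls this back using $f^*\kappa_j=\kappa_j-\sum_i\psi_i^j$ and \eqref{iota}, obtaining $\eta^a$ times the coefficient of $t^a$ in \eqref{testmu02}, exactly as you do. Your normalization check is consistent with this: $[t]\,\mathsf{C}(t)=\tfrac{5}{6}$, so for $g=2$ the relation reads $-\tfrac{5}{6}\kappa_1=0$, as expected on $\M_2$.
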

\begin{proof}
Following \cite{Ionel2}, we define integers $c_k$ by
\[\exp\left(\sum_{k=1}^\infty c_k t^k\right) = \mathsf{C}(t). \]
If $g = 0, 2 \pmod 3$, then by \cite[Proposition 1.7]{Ionel2}, the coefficient of $t^a$ in 
\[\exp\left(-\sum_{j=1}^\infty c_j \kappa_j t^j\right)\]
is a relation in $R^a(\M_g)$. We are interested in pulling this relation back to $\P(\mu)$ along the composition 
\[\P(\mu) \xrightarrow{\iota} \M_{g,n} \xrightarrow{f} \M_g.\] 
First recall that by \cite[Equation 1.5]{AC}, we have
\[f^*\kappa_j = \kappa_j - \sum_{i=1}^n \psi_i^j. \]
Applying $\iota^*$ and using \eqref{iota}, we find
\begin{align}
\label{eq:kappa-eta}
\iota^*f^*\kappa_j = \left(2g - 2 + n - \sum_{i=1}^n \frac{1}{(m_i + 1)^j}\right) \eta^j.
\end{align}
In particular,
\begin{align*} \iota^*f^* \exp\left(-\sum_{j=1}^\infty c_j \kappa_j t^j\right) &= \exp\left(-\sum_{j=1}^\infty c_j (2g - 2 + n) (\eta t)^j\right) \prod_{i=1}^n\exp\left(\sum_{j=1}^\infty c_j\left(\tfrac{\eta t}{m_i+1}\right)^j\right) \\
&= \frac{\prod_{i=1}^n \mathsf{C}(\frac{\eta t}{m_i + 1})}{\mathsf{C}(\eta t)^{2g-2+n}}.
\end{align*}
The coefficient of $t^a$ above is $\eta^a$ times the coefficient of $t^a$ in \eqref{testmu02}. If the coefficient is non-vanishing, then since pullback is a ring homomorphism, we conclude $\eta^a = 0$.
\end{proof}

\begin{prop} \label{g1}
Let $g = 1 \pmod 3$, let $a = \lfloor g/3 \rfloor + 1$ and let $\mu = (m_1, \ldots, m_n)$ be a partition of $2g - 2$. If the coefficient of $t^a$ is non-vanishing in 
\begin{align} \label{testmu} 
&\frac{\prod_{i=1}^n \mathsf{C}(\frac{t}{m_i + 1})}{\mathsf{C}(t)^{2g-2+n}}\left( 1 - 2t\left(2g - 2 + n - \sum_{i=1}^n \frac{1}{m_i+1}\right) 
\right. \\
& \qquad \qquad \qquad \qquad \left. - 12 t^2 
\left((2g - 2 + n)\frac{\mathsf{C}'(t)}{\mathsf{C}(t)} - \sum_{i=1}^n\frac{\mathsf{C}'(\frac{t}{m_i+1})}{(m_i+1)^2\mathsf{C}(\frac{t}{m_i+1})} \right)\right), \notag
\end{align}
then $\eta^a = 0$, i.e.\ Conjecture \ref{conj:upperbound} holds.
\end{prop}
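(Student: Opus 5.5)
The plan mirrors the proof of Proposition \ref{g02}, replacing the relation used there by the one Ionel gives for $g \equiv 1 \pmod 3$. With $c_k$ defined by $\exp(\sum_k c_k t^k) = \mathsf{C}(t)$ as before, \cite[Proposition 1.7]{Ionel2} (in the cohomological form of \cite[Remark 5.11]{Mo03}) provides, for $g \equiv 1 \pmod 3$, a relation in $R^a(\M_g)$. Schematically, it is the coefficient of $t^a$ in
\[\exp\Bigl(-\sum_{j\ge 1} c_j\kappa_j t^j\Bigr)\Bigl(1 - 2\kappa_1 t - 12\, t^2 \sum_{j \ge 1} j c_j \kappa_j t^{j-1}\Bigr),\]
or an equivalent correction factor that is linear in the $\kappa_j$. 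The first step is to extract this exact form from \cite{Ionel2}. I will then check that, after the substitution $\kappa_j \mapsto \lambda\, x^j$, the bracket becomes $1 - 2\lambda x t - 12 t^2 \lambda\, \frac{d}{dt}\log \mathsf{C}(xt)/x$ or similar. Fixing these normalizations is the main obstacle: the whole argument depends on matching the constants $2$ and $12$ and the derivative term exactly against \eqref{testmu}.

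Next I pull back along $\iota^* f^*$. By \eqref{eq:kappa-eta}, each $\kappa_j$ becomes $\bigl(2g-2+n - \sum_i (m_i+1)^{-j}\bigr)\eta^j$, which is a linear combination of the substitutions $\kappa_j \mapsto x^j$ with $x = \eta$ (weight $2g-2+n$) and $x = \eta/(m_i+1)$ (weight $-1$). The exponential factor therefore pulls back to $\prod_i \mathsf{C}(\eta t/(m_i+1))/\mathsf{C}(\eta t)^{2g-2+n}$, exactly as in the proof of Proposition \ref{g02}.

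The correction factor is linear in the $\kappa$'s, so it splits additively over the same weights. The term $\kappa_1 t$ pulls back to $\bigl(2g-2+n - \sum_i \frac{1}{m_i+1}\bigr)\eta t$. For the other term, I use $\sum_j j c_j x^j t^{j-1} = x\,\mathsf{C}'(xt)/\mathsf{C}(xt)$: the weight-$(2g-2+n)$ piece gives $(2g-2+n)\,\eta\,\mathsf{C}'(\eta t)/\mathsf{C}(\eta t)$, and the $i$th piece gives $-\frac{\eta}{m_i+1}\,\mathsf{C}'(\frac{\eta t}{m_i+1})/\mathsf{C}(\frac{\eta t}{m_i+1})$. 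I still need to reconcile the power $(m_i+1)^{-2}$ in \eqref{testmu} with the exact shape of Ionel's derivative term, possibly by writing it as $t\,\frac{d}{dt}$ applied to the log, together with the extra factor of $t$.

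Since the pullback is homogeneous, every factor depends on $\eta t$ alone. The coefficient of $t^a$ in the pulled-back relation is therefore $\eta^a$ times the coefficient of $t^a$ in \eqref{testmu}. Because pullback is a ring homomorphism, the relation in $R^a(\M_g)$ pulls back to zero. Hence if that coefficient is non-vanishing, then $\eta^a = 0$, which is Conjecture \ref{conj:upperbound} in degree $\lfloor g/3\rfloor + 1$.
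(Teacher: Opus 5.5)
Your overall strategy matches the paper's: pull back Ionel's $g \equiv 1 \pmod 3$ relation along $\iota^* f^*$ and use \eqref{eq:kappa-eta}. You also handle the exponential factor and the $-2\kappa_1 t$ term correctly. The gap is the form of the relation you wrote down, which is exactly the step you left unresolved.

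In \cite[Proposition 1.7]{Ionel2} the correction term is $12\sum_{j\ge1} j c_j \kappa_{j+1} t^{j+1}$, with the index shifted. It is not $12\, t^2\sum_j j c_j\kappa_j t^{j-1} = 12\sum_j j c_j\kappa_j t^{j+1}$ as you wrote. Your version is not homogeneous: its $t^a$-coefficient mixes classes of degree $a$ and $a-1$, so it is not an element of $R^a(\M_g)$. This is also why your pulled-back term comes out as $12t^2\cdot\frac{\eta}{m_i+1}\,\mathsf{C}'/\mathsf{C}$, with only one power of $\eta$ and a factor $(m_i+1)^{-1}$ rather than $(m_i+1)^{-2}$. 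For the same reason, your claim that every factor depends on $\eta t$ alone is false for the formula as written. So the conclusion that the $t^a$-coefficient equals $\eta^a$ times the coefficient in \eqref{testmu} does not follow from what you have.

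With the correct index, $\iota^*f^*\kappa_{j+1} = \bigl(2g-2+n-\sum_i (m_i+1)^{-(j+1)}\bigr)\eta^{j+1}$. Use the identity $\sum_{j\ge1} j c_j x^{j+1} = x^2\,\mathsf{C}'(x)/\mathsf{C}(x)$, which is $t^2\frac{d}{dt}\log\mathsf{C}$ evaluated appropriately. Applying it with $x=\eta t$ and with $x=\eta t/(m_i+1)$ shows that the correction term pulls back to
\[12(\eta t)^2\Bigl((2g-2+n)\frac{\mathsf{C}'(\eta t)}{\mathsf{C}(\eta t)} - \sum_{i=1}^n \frac{\mathsf{C}'(\frac{\eta t}{m_i+1})}{(m_i+1)^2\,\mathsf{C}(\frac{\eta t}{m_i+1})}\Bigr).\]
This produces the $(m_i+1)^{-2}$ in \eqref{testmu} and makes every factor a function of $\eta t$. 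With this fix, the rest of your argument goes through as in the paper.
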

\begin{proof}
If $g = 1 \pmod 3$, then \cite[Proposition 1.7]{Ionel2}  says that the coefficient of $t^a$ in 
\begin{equation} \label{newrel} \exp\left(-\sum_{j=1}^\infty c_j \kappa_j t^j\right)\left(1 - 2\kappa_1t - 12 \sum_{j=1}^\infty j c_j \kappa_{j+1}t^{j+1}\right)
\end{equation}
is a relation in $R^a(\M_g)$.
Note that
\[\sum_{j=1}^\infty jc_j t^{j+1} = t^2 \sum_{j=1}^\infty j c_j t^{j-1} = t^2 \frac{d}{dt} \sum_{j=1}^\infty c_j t^j = t^2 \frac{d}{dt} \log \mathsf{C}(t) = t^2\frac{\mathsf{C}'(t)}{\mathsf{C}(t)}.\]
Using this, when we apply $f^*\iota^*$ to the second term of \eqref{newrel}, we obtain
\begin{align*}
&1 - 2\left(2g - 2 + n - \sum_{i=1}^n \frac{1}{m_i+1}\right)(\eta t) - 12 \sum_{j=1}^\infty j c_j \left(2g - 2 + n - \sum_{i=1}^n \frac{1}{(m_i+1)^{j+1}}\right)(\eta t)^{j+1} \\
&
\qquad = 1 - 2\left(2g - 2 + n - \sum_{i=1}^n \frac{1}{m_i+1}\right)(\eta t) - 12 (\eta t)^2 
\left((2g - 2 + n)\frac{\mathsf{C}'(\eta t)}{\mathsf{C}(\eta t)} \right. \\
&\qquad \qquad \qquad \qquad \qquad \qquad \qquad \qquad \qquad \qquad \qquad \qquad \qquad  - \left. \sum_{i=1}^n\frac{\mathsf{C}'(\frac{\eta t}{m_i+1})}{(m_i+1)^2\mathsf{C}(\frac{\eta t}{m_i+1})} \right) \qedhere
\end{align*}
\end{proof}

\begin{proof}[Proof of Theorem \ref{thm:upperbound}] 
By Propositions \ref{g02} and \ref{g1}, to prove that Conjecture \ref{conj:upperbound} holds for a given $g$ and $\mu$, it suffices to show that the coefficient of $t^a$ in an appropriate power series is non-vanishing.
Using Sage, we have checked the coefficient of $t^a$ is non-vanishing in each relevant power series for any partition of $g$ for $g \leq 30$. To speed up the calculations, we worked over $\mathbb{F}_p$ for some prime $p$ that does not divide any denominators of coefficients of $\mathsf{C}(t)$ in degrees less than or equal to $a$. Indeed, if the coefficient is non-vanishing over $\mathbb{F}_p$, then the original coefficient over $\qq$ is also non-vanishing.
For some $\mu$, the coefficient of $t^a$ in the relevant power series vanishes over $\mathbb{F}_p$. For such $\mu$, we performed the calculation again over the finite field of the next prime order, and continued checking over larger finite fields until the coefficient was seen to be non-vanishing.
\end{proof}

\begin{rem}
\label{rem:conj-ell}
 Consider the stratum of $\ell$-differentials $\P^{\ell}(m'_1,\ldots, m'_n)$, where $m'_1 + \cdots + m'_n = \ell(2g-2)$ and $m'_i\neq -\ell$ for all $i$. Then by \cite[Proposition 2.1]{ChenTauto}, 
 \eqref{eq:kappa-eta} becomes 
 $$\iota^{*}f^{*}\kappa_j = \left( 2g-2+n - \sum_{i=1}^n \frac{1}{\left(\displaystyle\frac{m'_i}{\ell} + 1\right)^j} \right)\left(\frac{\eta'}{\ell}\right)^j,$$
 where $\eta'$ is the tautological line  bundle class on $\P^{\ell}(m'_1,\ldots, m'_n)$. 

Let $m_i = m'_i/\ell$ and $\eta = \eta' / \ell$. Then $(m_1, \ldots, m_n)$ becomes a ``partition'' of $2g-2$, where $m_i \in \mathbb Q$ and $m_i\neq -1$. Enlarging the scope of Propositions~\ref{g02} and~\ref{g1} to the case $m_i \in \mathbb Q\setminus \{-1\}$, Conjecture~\ref{conj:upperbound} can be reduced to checking the nonvanishing of the coefficient of the same power series, setting $m_i = m_i'/\ell$.

Finally, if some $m'_i = - \ell$, then $\eta' = 0$. Instead, in this case we should ask about the vanishing of the monomials of $\psi_i$ associated to the poles of order $\ell$. Notice that, in the case $\eta' = 0$, \eqref{iota} shows that the pullback of any relation among $\kappa$ classes is trivial. Thus, some other method will be needed to find relations among the $\psi_i$.
\end{rem}

\section{Hyperelliptic differentials and varying strata} \label{sec:varying}

In this section, we prove Theorem~\ref{thm:varying} which is reformulated as Proposition~\ref{prop:varying} below. We first set up some notation. 

Let $\nu = (2k_1, \ldots, 2k_m, 2\ell_1 - 1, \ldots, 2\ell_n-1)$ be a given zero and pole type of {\em quadratic differentials} (i.e. possibly meromorphic sections of $K^{\otimes 2}$) in genus $0$, where $k_i > 0$, $\ell_j \geq 0$, and 
$\sum_{i=1}^m 2k_i + \sum_{j=1}^{n} (2\ell_j-1) = -4$.  
Let $\P^2(\nu)$ be the stratum of quadratic differentials of type $\nu$, up to projectivization. For a quadratic differential $(\p^1, q) \in \P^2 (\nu)$, let $\pi\colon C\to \p^1$ be the canonical double cover branched at the $n$ 
odd-order zeros and poles of $q$, and $\pi^{*}q = \omega^2$, where $\omega$ is a holomorphic abelian differential on $C$ of zero type 
$$\mu = (k_1, k_1, \ldots, k_m, k_m,  2\ell_1, \ldots, 2\ell_n).$$ 
 This construction lifts $\P^2 (\nu)$ into the stratum $\P^1(\mu)$ as a subvariety, which we call the {\em locus of hyperelliptic differentials} in $\P^1(\mu)$. We refer to \cite[Section 2]{BCGGMk} for an introduction to the geometry of $\ell$-differentials and canonical $\ell$-covers for general $\ell$.  

\begin{prop}
\label{prop:varying}
Let $k_i > 0$ be odd for $1\leq i \leq m$, let $\ell_j > 0$ for $1\leq j\leq n_{+}$, and let $g = 1 + \sum_{i=1}^m k_i + 
\sum_{j=1}^{n_{+}} \ell_j$. Then for $m \geq 4$ and sufficiently large $g$, the stratum   
$$\P^1(k_1, k_1, \ldots, k_m, k_m, 2\ell_1, \ldots, 2\ell_{n_{+}})$$ is varying and $\eta \neq 0$ in this case.  \end{prop}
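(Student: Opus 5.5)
The plan is to exhibit two Teichm\"uller curves in $\P^1(\mu)$, $\mu = (k_1,k_1,\ldots,k_m,k_m,2\ell_1,\ldots,2\ell_{n_+})$, with different area Siegel--Veech constants. The first family comes from the locus of hyperelliptic differentials: square-tiled surfaces (or any Teichm\"uller curves) in the genus zero quadratic stratum $\P^2(\nu)$, lifted via the canonical double cover, where $\nu = (2k_1-2?, \ldots)$ is chosen so that the cover has type $\mu$. Concretely, since the $k_i$ are odd, a pair of zeros of order $k_i$ on $C$ is the preimage of an unramified zero of even order $k_i-1$ of $q$. Each $2\ell_j$ is the preimage of a branch point of odd order $2\ell_j-1$. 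Poles are added to reach total degree $-4$. The second family consists of Teichm\"uller curves that are generic in a non-hyperelliptic connected component of the stratum, for instance square-tiled surfaces. Their Siegel--Veech constants, by Eskin--Masur--Zorich and the large-genus asymptotics of Aggarwal and Chen--M\"oller--Sauvaget--Zagier, tend to $1/2$ as $g\to\infty$, uniformly in the stratum.

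Next I will compute the area Siegel--Veech constant of the hyperelliptic Teichm\"uller curves. The Eskin--Kontsevich--Zorich formula expresses the sum of Lyapunov exponents $L = \frac{1}{12}\sum_i \frac{m_i(m_i+2)}{m_i+1} + \frac{\pi^2}{3}c_{\mathrm{area}}$. For the double cover of a genus zero quadratic differential, $L^+ = \frac14\sum_{j \text{ odd}} \frac{1}{d_j+2}$ summed over the odd singularities $d_j$ of $q$. This is Eskin--Kontsevich--Zorich's formula for hyperelliptic loci, or equivalently Chen--M\"oller's nonvarying computation for genus zero quadratic strata. From this I get $c_{\mathrm{area}}$ explicitly as a function of $(k_i,\ell_j)$, and I will check that it stays bounded away from $1/2$ as $g\to\infty$ with $m\ge 4$ fixed or growing. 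For example, $L^+$ is bounded by roughly $\frac14(\#\text{odd points})$ while $\kappa$-type terms grow. The difference between the hyperelliptic value and the generic limit then gives the varying property for large $g$. The hypothesis $m\ge 4$ should be exactly what makes the hyperelliptic locus lie outside the hyperelliptic connected component. That is what makes the comparison with generic Teichm\"uller curves of the same component meaningful, and I would confirm it using the Kontsevich--Zorich classification.

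Finally, $\eta\neq 0$ follows because a Teichm\"uller curve $T$ satisfies $T\cdot\eta$ equal to a linear combination of $\chi(T)$ and $T\cdot\delta$ with coefficients determined by $\kappa_\mu$ and $c_{\mathrm{area}}$. If $\eta$ were zero (as a class pulled back to a compactification), all Teichm\"uller curves would share the same ratio and hence the same $c_{\mathrm{area}}$, contradicting the varying property. I expect the main obstacle to be the explicit asymptotic comparison: producing a clean lower bound on $|c_{\mathrm{area}}^{\mathrm{hyp}} - \tfrac12|$ uniform in all parameter choices. I would first try to show that $c_{\mathrm{area}}^{\mathrm{hyp}}$ tends to a limit strictly less than $1/2$ by direct estimation of the EKZ expression.
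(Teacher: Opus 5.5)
Your strategy matches the paper's: lift a Teichm\"uller curve $\T$ from a genus-zero quadratic stratum, compute $c_{\area}(\T)$ via Eskin--Kontsevich--Zorich, compare with $\lim_{g\to\infty} c_{\area}(\mu)^{\nonhyp}=1/2$, and deduce $\eta\neq 0$ from the varying property. However, you never carry out the decisive comparison, and the version you outline would fail.

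First, your quadratic signature is wrong. A pair of zeros of $\omega$ of order $k_i$ at the two unramified preimages comes from a zero of $q$ of order $2k_i$, not $k_i-1$. The correct stratum is $\nu=(2k_1,\ldots,2k_m,2\ell_1-1,\ldots,2\ell_{n_+}-1,-1,\ldots,-1)$ with $n_0$ simple poles. Since the double cover has $2g+2$ branch points, $n_0=2g+2-n_+$, and these poles lift to regular Weierstrass points. The degree condition $\sum_i 2k_i+\sum_j(2\ell_j-1)=n_0-4$ then forces $n_0\geq 4+2m+n_+$.

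Second, the inequality goes the opposite way from what you predict. The Lyapunov sum $\frac14\sum_j \frac{1}{d_j+2}$ runs over all $2g+2$ odd singularities. Each simple pole contributes $1/4$, so the sum grows linearly in $g$ and dominates the $\kappa$-term rather than staying bounded. Carrying out EKZ gives $\pi^2 c_{\area}(\T)=1-\frac{m+n}{2}+\sum_i\frac{1}{2k_i+2}+\sum_j\frac{1}{2\ell_j+1}$ with $n=n_0+n_+$. Keeping only the contribution $1$ of each simple pole gives $\pi^2 c_{\area}(\T)\geq 1+\frac{n_0-m-n_+}{2}\geq\frac{6+m}{2}$. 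So $c_{\area}(\T)$ is bounded \emph{below} by $(6+m)/(2\pi^2)$, and your plan to show it tends to a limit strictly less than $1/2$ cannot succeed.

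This also explains the hypothesis $m\geq 4$, which you misattribute. It is not what keeps the hyperelliptic locus outside a hyperelliptic component: for $m\geq 2$ the stratum has an odd entry and at least four zeros, so by Kontsevich--Zorich it is already connected and not hyperelliptic. Rather, $m\geq 4$ is exactly the numerical condition $\frac{6+m}{2}\geq 5>\frac{\pi^2}{2}$ that lets the crude lower bound beat the generic limit.

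A smaller point concerns your ``generic'' family. An individual square-tiled Teichm\"uller curve need not have $c_{\area}$ near $1/2$. What you can use is that averages over large-degree square-tiled Teichm\"uller curves approximate the stratum's $c_{\area}$. So if every Teichm\"uller curve shared the value $c_{\area}(\T)$, the stratum would too, contradicting the limit $1/2$ for large $g$.

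With these corrections your argument becomes the paper's. Your final step, that varying implies $\eta\neq 0$, agrees with the cited remark of Chen.
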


\begin{proof}
Let $n_0 = 2g+2 - n_{+}$. Consider 
$$\mu = (k_1, k_1, \ldots, k_m, k_m,  2\ell_1, \ldots, 2\ell_{n_{+}}, 0^{n_0}),$$
where $n = n_0 + n_{+}$ as in the preceding notation. 
Let $\T$ be a Teichm\"uller curve in the locus of hyperelliptic differentials in $\P^1(\mu)$ that lifts from the corresponding stratum of quadratic differentials 
$\P^2(\nu)$ in genus $0$.  
The area Siegel--Veech constant of $\T$ satisfies  
$$ \pi^2 \cdot c_{\area}(\T) =  1 - \frac{m + n}{2} + \sum_{i=1}^m \frac{1}{2k_i+2} + \sum_{j=1}^n \frac{1}{2\ell_j+1}, $$
see \cite[Corollary 1 and Theorem 1]{EKZ}. Additionally, the large genus asymptotic of $c_{\area}$ for generic holomorphic abelian differentials in non-hyperelliptic strata is 
$$ \lim_{g\to\infty} c_{\area}(\mu)^{\nonhyp} = \frac{1}{2}, $$
see \cite[Theorem 1.5]{CMSZ}, which can be approximated by using Teichm\"uller curves generated by large degree branched covers of elliptic curves, see \cite[Appendix A]{ChenRigid}. 

Since $\sum_{i=1}^m 2k_i + \sum_{j=1}^{n_{+}} (2\ell_j - 1) = n_0 - 4$, we have $ n_0 \geq 4 + 2m + n_{+}$, and hence  
\begin{eqnarray*}
\pi^2 \cdot c_{\area}(\T) \ge 1 - \frac{m+n_0+n_{+}}{2} + n_0   \geq  \frac{6+m}{2}.  
\end{eqnarray*}
For $m \geq 4$, the above implies 
$$\pi^2 \cdot c_{\area}(\T) \geq 5 > \frac{\pi^2}{2} = \pi^2 \lim_{g\to\infty} c_{\area}(\mu)^{\nonhyp}.$$
Therefore, $\P^1(\mu)$ is varying for sufficiently large $g$, and $\eta \neq 0$ in $A^1(\P^1(\mu))$ follows from \cite[Remark 3.1]{ChenNonvarying}. 
\end{proof}

\bibliographystyle{alpha}
\bibliography{biblio}
\end{document}